\DeclareSymbolFont{cyrletters}{OT2}{wncyr}{m}{n}
\DeclareMathSymbol{\Sha}{\mathalpha}{cyrletters}{"58}
 \newtheorem{thm}{Theorem}[section]
 \newtheorem{cor}[thm]{Corollary}
 \newtheorem{lem}[thm]{Lemma}
 \newtheorem{prop}[thm]{Proposition}
 \newtheorem{rmk}[thm]{Remark}
 \theoremstyle{definition}
 \theoremstyle{remark}
 \theoremstyle{question}
 \numberwithin{equation}{section}
\newcommand{\sm}{\left(\begin{smallmatrix}}
\newcommand{\esm}{\end{smallmatrix}\right)}
\newcommand{\mat}{\left(\begin{matrix}}
\newcommand{\emat}{\end{matrix}\right)}
\def\HH{\mathbb{H}}
\def\ZZ{\mathbb{Z}}
\def\m{\mathrm{mod}}
\def\SL{\mathrm{SL}}
\begin{document}

\title{Congruences involving the $U_{\ell}$ operator for weakly holomorphic modular forms}


\author{Dohoon Choi}

\author{Subong Lim}

\address{Department of Mathematics, Korea University, 145 Anam-ro, Seongbuk-gu, Seoul 02841, Republic of Korea}
\email{dohoonchoi@korea.ac.kr}	

\address{Department of Mathematics Education, Sungkyunkwan University, Jongno-gu, Seoul 03063, Republic of Korea}
\email{subong@skku.edu}

\subjclass[2010]{11F33, 11F37}

\thanks{Keywords: congruence, modular forms of half-integral weight, trace of singular moduli  }


\begin{abstract}
Let $\lambda$ be an integer, and  $f(z)=\sum_{n\gg-\infty} a(n)q^n$ be a weakly holomorphic modular form of weight $\lambda+\frac 12$ on $\Gamma_0(4)$ with integral coefficients.
Let $\ell\geq 5$ be a prime.
Assume that the constant term $a(0)$ is not zero modulo $\ell$.
Further, assume that, for some positive integer $m$, the Fourier expansion of $(f|U_{\ell^m})(z) = \sum_{n=0}^\infty b(n)q^n$ has the form
\[
(f|U_{\ell^m})(z) \equiv  b(0) + \sum_{i=1}^{t}\sum_{n=1}^{\infty} b(d_i n^2) q^{d_i n^2} \pmod{\ell},
\]
where $d_1, \ldots, d_t$ are square-free positive integers, and  the operator $U_\ell$ on formal power series  is defined by
\[
\left( \sum_{n=0}^\infty a(n)q^n \right) \bigg| U_\ell = \sum_{n=0}^\infty a(\ell n)q^n.
\]
Then, $\lambda \equiv 0 \pmod{\frac{\ell-1}{2}}$. Moreover, if $\tilde{f}$ denotes the coefficient-wise reduction of $f$ modulo $\ell$, then we have
\[
  \biggl\{ \lim_{m \rightarrow \infty} \tilde{f}|U_{\ell^{2m}},  \lim_{m \rightarrow \infty} \tilde{f}|U_{\ell^{2m+1}} \biggr\} = \biggl\{ a(0)\theta(z), a(0)\theta^\ell(z) \in \mathbb{F}_{\ell}[[q]] \biggr\},
\]
where $\theta(z)$ is the Jacobi theta function defined by $\theta(z) = \sum_{n\in\ZZ} q^{n^2}$.
By using this result, we obtain the distribution of the Fourier coefficients of weakly holomorphic modular forms in congruence
classes.
This applies to the congruence properties for traces of singular moduli.
\end{abstract}

\maketitle


\section{Introduction}
The distribution of the coefficients of half-integral weight modular forms in congruence classes is related to the study of the congruence properties for various objects such as the algebraic parts of the central critical values of modular $L$-functions, orders of Tate-Shafarevich groups of elliptic curves, the number of partitions of a positive integer, and so on. With such diverse applications, this subject has been studied in the works of Bruinier \cite{B2}, Bruinier and Ono \cite{BO}, Ono and Skinner \cite{OS}, and Ahlgren and Boylan \cite{AB, AB1}.

 Vign{\'e}ras \cite{Vig} proved that if $f$ is a modular form of half-integral weight on $\Gamma_1(4N)$ whose coefficients are supported on finitely many square classes, then $f$ is a linear combination of Shimura theta series (a different proof of this result was given by Bruinier \cite{B1}). For a prime $\ell$, the mod $\ell$ extension of a characteristic zero theorem of Vign{\'e}ras can be considered as a classification of the modular forms $f(z) = \sum_{n=0}^\infty a(n)q^n$ of half-integral weight having  Fourier expansion of the form
\begin{equation}\label{thetaform}
f(z) \equiv a(0) + \sum_{i=1}^{t}\sum_{n=1}^{\infty} a(d_i n^2) q^{d_i n^2} \pmod{\ell},
\end{equation}
where  $q=e^{2\pi iz}$ for a complex number $z$ in the complex upper half plane $\HH$ and $d_1, \ldots, d_t$ are square-free positive integers.
Modular forms of half-integral weight having  Fourier expansion of the form (\ref{thetaform}) play important roles in proving many of the above theorems.

In this vein, this paper studies  $f|U_{\ell^m}$ with Fourier expansion of the form (\ref{thetaform}) for a weakly holomorphic modular form $f$ on $\Gamma_0(4)$,
and then obtains  the distribution of the Fourier coefficients of weakly holomorphic modular forms in congruence classes.
Here, the operator $U_\ell$ on formal power series  is defined by
\[
\left( \sum_{n=0}^\infty a(n)q^n \right) \bigg| U_\ell = \sum_{n=0}^\infty a(\ell n)q^n.
\]
Further, we apply this result to the congruence properties for traces of singular moduli.

Let $M^!_{k}$ denote the space of  weakly holomorphic modular forms of  weight $k$ on $\Gamma_0(4)$ with  trivial character. Throughout this paper,  we  assume that $\ell$ is a prime larger than or equal to $5$. For a weakly holomorphic modular form $f=\sum_{n\gg-\infty} a(n)q^n$ with integral coefficients, we denote by $\tilde{f}$ the coefficient-wise reduction of $f$ modulo $\ell$.

In the following theorem, we prove a sufficient condition of weight $\lambda+\frac12$ for weakly holomorphic modular forms $f\in M^!_{\lambda+\frac12}$ with $\lambda\in\ZZ$ such that
$f|U_{\ell^m}$ is supported on finitely many square classes modulo $\ell$ for some positive integer $m$, and then we show that, for such a form $f$, the limits $\lim_{m\to\infty} \tilde{f}|U_{\ell^{2m}}$ and  $\lim_{m\to\infty} \tilde{f}|U_{\ell^{2m+1}}$ are convergent to $a(0)\theta$ or $a(0)\theta^{\ell}$ in $\mathbb{F}_{\ell}[[q]]$,
where $\theta$ is the Jacobi theta function defined by $\theta(z) = \sum_{n\in\ZZ} q^{n^2}$.

\begin{thm} \label{main1}
Let $\ell\geq 5$ be a prime and $\lambda$ be an integer.
Let $f(z)=\sum_{n\gg-\infty} a(n)q^n\in M^!_{\lambda+\frac12}$ be a weakly holomorphic modular form with integral  coefficients.
Assume that the constant term $a(0)$ is not zero modulo $\ell$.
Further, assume that, for some positive integer $m$, the  Fourier expansion  of $f|U_{\ell^m}$ has the form (\ref{thetaform}).
Then, $\lambda \equiv 0 \pmod{\frac{\ell-1}{2}}$. Moreover, we have
\[
\biggl\{ \lim_{n\to\infty} \tilde{f}|U_{\ell^{2n}}, \lim_{n\to\infty} \tilde{f}|U_{\ell^{2n+1}} \biggr\} = \biggl\{ a(0)\theta, a(0)\theta^{\ell} \in \mathbb{F}_{\ell}[[q]] \biggr\}.
\]
\end{thm}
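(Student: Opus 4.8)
The plan is to work in the algebra of mod $\ell$ modular forms and exploit the structure of the $U_\ell$ operator acting on square-class-supported forms. First I would reduce $f$ modulo $\ell$ and observe that $\tilde f$ lives in the space of weakly holomorphic mod $\ell$ modular forms of weight $\lambda + \tfrac12$ on $\Gamma_0(4)$; by Serre–Swinnerton-Dyer-type theory for half-integral weight (or by multiplying by a suitable power of $\theta$ to land in integral weight and using the classical theory), the filtration $\omega(\tilde f)$ is well-defined and the operator $U_\ell$ on such forms lowers or controls the filtration in a predictable way. The hypothesis gives a positive integer $m$ with $\tilde f | U_{\ell^m}$ supported on finitely many square classes $d_1,\dots,d_t$. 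The key structural input is the mod $\ell$ Vignéras-type classification alluded to in the introduction: a half-integral weight mod $\ell$ modular form whose Fourier expansion has the shape \eqref{thetaform} must be a linear combination of theta series $\sum_n \chi(n) q^{n^2}$, and in the weakly holomorphic setting with nonzero constant term the relevant such forms are exactly $\theta$ and its Frobenius twist $\theta^\ell$ (whose $q$-expansion is $\sum_n q^{\ell^2 n^2} \bmod \ell$, again supported on a single square class).

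Next I would analyze the dynamics of $U_\ell$ on the finite-dimensional mod $\ell$ space spanned by the $U_{\ell^j}$-translates of $\tilde f$ for $j \ge m$. Since this space is finite-dimensional over $\mathbb F_\ell$ and $U_\ell$ maps it to itself eventually (the filtration cannot decrease forever, and once we are in the square-class-supported regime the span is finite), the sequence $\tilde f | U_{\ell^j}$ is eventually periodic. I would show the eventual period divides $2$: the crucial computation is $\theta | U_\ell = \theta$ and $\theta^\ell | U_\ell$ picks out coefficients at multiples of $\ell$, so $\theta^\ell | U_\ell = \theta$ while $\theta | U_{\ell^{\text{something}}}$ interacts with the constant term. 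More precisely, one computes directly that on the two-dimensional span of $\{\theta, \theta^\ell\}$ modulo $\ell$ the operator $U_\ell$ acts by swapping (up to the constant-term bookkeeping), so $U_{\ell^2}$ is the identity on the relevant coset; this forces the two limits $\lim_n \tilde f|U_{\ell^{2n}}$ and $\lim_n \tilde f|U_{\ell^{2n+1}}$ to exist and to be the two elements $\{a(0)\theta, a(0)\theta^\ell\}$, the constant term being pinned down because $U_\ell$ fixes the constant term $a(0) \not\equiv 0$.

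For the weight congruence $\lambda \equiv 0 \pmod{\tfrac{\ell-1}{2}}$, I would argue that since $\tilde f|U_{\ell^m}$ is (mod $\ell$) a weight-$(\tfrac12)$ form — namely a multiple of $\theta$ or $\theta^\ell$, both of which have filtration $\tfrac12$ in the appropriate normalization — and since $U_\ell$ changes the weight only by multiples of $\ell - 1$ at the level of filtrations (the standard fact that $E_{\ell-1} \equiv 1 \bmod \ell$ lets one shift weight by $\ell-1$ freely, and $U_\ell$ respects this), the original weight $\lambda + \tfrac12$ must be congruent to $\tfrac12$ modulo $\tfrac{\ell-1}{2}$ in the half-integral normalization, i.e. $\lambda \equiv 0 \pmod{\tfrac{\ell-1}{2}}$. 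Here one has to be a little careful passing between the half-integral weight filtration and the integral one obtained after multiplying by $\theta$; the cleanest route is to multiply through by $\theta$ (weight $\tfrac12$) to get genuine integral-weight mod $\ell$ forms and apply the classical theory of $\Theta$-operators and filtrations of Katz/Serre/Swinnerton-Dyer, tracking how $U_\ell$ acts there.

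The main obstacle I anticipate is making the mod $\ell$ Vignéras classification rigorous and usable for \emph{weakly holomorphic} forms rather than holomorphic ones: the cited results of Vignéras and Bruinier are stated for holomorphic modular forms, so I would need either to control the principal part of $\tilde f|U_{\ell^m}$ (showing it can be absorbed or that it forces $t$ and the $d_i$ into the theta shape) or to invoke an extension of the classification to the weakly holomorphic mod $\ell$ setting. A secondary technical point is proving that the orbit $\{\tilde f|U_{\ell^j}\}_{j\ge 0}$ actually lands in the finite span of $\{\theta,\theta^\ell\}$ and not merely in some larger eventually-periodic cycle — this requires showing that once a mod $\ell$ half-integral weight form is supported on finitely many square classes with nonzero constant term, iterating $U_\ell$ collapses it onto the $\theta$/$\theta^\ell$ subspace, which is where the filtration-lowering property of $U_\ell$ and the explicit action on $q$-expansions do the real work.
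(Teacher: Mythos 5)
Your argument rests on a ``mod $\ell$ Vign\'eras-type classification'' --- that any half-integral-weight mod $\ell$ form with expansion of the shape (\ref{thetaform}) and nonzero constant term must be a linear combination of theta series, and in fact a multiple of $\theta$ or $\theta^{\ell}$. No such classification is available: Vign\'eras's theorem (and Bruinier's proof of it) is a characteristic-zero statement, and its mod $\ell$ analogue is precisely the open problem this theorem is a case of. Invoking it makes the argument circular, since ``$f|U_{\ell^m}$ of the form (\ref{thetaform}) with $a(0)\not\equiv 0$ forces the orbit onto $\{a(0)\theta, a(0)\theta^{\ell}\}$'' is essentially the content of the theorem. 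The paper never uses a classification; it drives the filtration down by iterating $U_{\ell}$ (using the bound $\omega_{\ell}(F|U_{\ell})\leq \ell+\frac{\omega_{\ell}(F)-1}{\ell}$, which forces strict decrease while $\omega_{\ell}>\ell+1$), then applies the Ahlgren--Boylan/Choi/Choi--Kilbourn weight bounds (Theorem \ref{ABC}) to pin the terminal filtration to $\frac12$, $\frac32$, or $\frac{\ell}{2}$, rules out $\frac32$ by computing $\Theta(\theta^3)$ and using Lemma \ref{ACRlemma} (which restricts the $d_i$ to $1,2,\ell,2\ell$, so $d_i=3$ is impossible), and finally uses $\dim M_{1/2}=1$ together with $\{(g|U_{\ell^t})|U_{\ell}\}^{\ell}\equiv g|U_{\ell^t}$ in the $\frac{\ell}{2}$ case. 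None of this work appears in your proposal, and the two obstacles you flag at the end --- extending to weakly holomorphic forms and showing the orbit collapses onto the $\theta$/$\theta^{\ell}$ span rather than some larger periodic cycle --- are exactly the substance of the proof, not secondary technicalities. The first is handled in the paper by Treneer's theorem and Ahlgren--Boylan (Theorem \ref{weakholo}), which replaces $f|U_{\ell^{m+m_1}}$ by a genuinely holomorphic form of the correct weight class mod $\ell-1$; the second is the filtration/weight-bound argument just described.

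Two further corrections. Your ``crucial computation'' $\theta|U_{\ell}=\theta$ is false: $\theta|U_{\ell}=1+2\sum_{n\geq1}q^{\ell n^2}\equiv\theta^{\ell}\pmod{\ell}$, while $(\theta^{\ell})|U_{\ell}\equiv\theta\pmod{\ell}$ (and $\theta^{\ell}\equiv 1+2\sum q^{\ell n^2}$, not $\sum q^{\ell^2 n^2}$); the swapping behaviour you describe later is right, but it is this corrected computation that produces the two alternating limits. Also, for half-integral weight on $\Gamma_0(4)$ the filtration jump under $U_{\ell}$ is an odd multiple of $\frac{\ell-1}{2}$, not a multiple of $\ell-1$ as in the integral-weight case (Proposition \ref{filtration}(3) gives $\omega_{\ell}(f|U_{\ell})-\omega_{\ell}(f)=\frac{\ell-1}{2}(2t+1)$); your weight argument reaches the right congruence $\lambda\equiv 0\pmod{\frac{\ell-1}{2}}$, but the intermediate claim needs this half-integral version, which itself requires proof (the paper establishes it via $\theta$-multiplication, the $q$-expansion principle at the cusp $\frac12$, and the Ligozat modular function $g$).
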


Let $M$ be a positive integer. With the same notation as in Theorem \ref{main1}, we say that the Fourier coefficients of $f$ are {\it well-distributed modulo $M$} if for every integer $r$, we have
\begin{equation*}
\#\{ 0 \leq n \leq X \; | \;  a(n)\equiv r\ (\mathrm{mod}\ M) \} \gg_{r,M}
\left\{
\begin{array}{ll}
{\sqrt{X}}/{\log X}  & \text{ if }  r \not \equiv 0 \pmod{M},   \\
X                        & \text{ if } r \equiv 0 \pmod{M}.
\end{array}
\right.
\end{equation*}
Bruinier and Ono \cite{BO} proved, for a prime $\ell \geq5$, that if  the cusp form $f$ does not have  Fourier expansion of the form (\ref{thetaform}), then, for each positive integer $j$, the Fourier coefficients of $f$ are well-distributed modulo $\ell^j$. Thus, the classification of modular forms having the form (\ref{thetaform}) can be applied to  the study of the distribution of the coefficients of modular forms in congruence classes modular $\ell^j$ (for example, see \cite{AB} and \cite{C3}). In this vein, we prove the following theorem.

\begin{thm} \label{main2}
Let $\ell \geq 5$ be a prime and $\lambda$ be an integer.
Let $f(z)=\sum_{n\gg-\infty} a(n)q^n\in M^!_{\lambda+\frac12}$ be a weakly holomorphic modular form with integral coefficients. Suppose that the constant term $a(0)$ is not zero modulo $\ell$. If $\lambda \not \equiv 0 \pmod{\frac{\ell-1}{2}}$, then  for each positive integer $j$, the Fourier coefficients of $f$ are well-distributed modulo $\ell^j$.
\end{thm}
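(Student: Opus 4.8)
The plan is to argue by contraposition, deducing Theorem~\ref{main2} from Theorem~\ref{main1} together with the theorem of Bruinier and Ono recalled above, after the standard reduction from weakly holomorphic forms to cusp forms. So assume $\lambda\not\equiv 0\pmod{\frac{\ell-1}{2}}$ and $a(0)\not\equiv 0\pmod\ell$, fix a positive integer $j$, and aim to show the Fourier coefficients of $f$ are well-distributed modulo $\ell^j$. By the contrapositive of Theorem~\ref{main1}, for \emph{every} positive integer $m$ the Fourier expansion of $f|U_{\ell^m}$ fails to have the shape \eqref{thetaform} modulo $\ell$; this is the only way Theorem~\ref{main1} enters.

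Next I would reduce to a holomorphic form. Replace $f$ by $fA$ with $A=E_{\ell-1}^{t\ell^{j-1}}$ for a large integer $t$; since $E_{\ell-1}\equiv 1\pmod\ell$ we have $A\equiv 1\pmod{\ell^j}$, so $fA$ has the same $q$-expansion as $f$ modulo $\ell^j$, while $fA\in M^!_{\lambda'+\frac12}$ with $\lambda'=\lambda+t(\ell-1)\ell^{j-1}$, so that $\lambda'\equiv\lambda\pmod{\ell-1}$ and $\lambda'\geq 1$. Clear the principal parts of $fA$ at the cusps of $\Gamma_0(4)$ by applying $U_{\ell^s}$ for $s$ sufficiently large — this raises the order of vanishing at every cusp, in the style of Treneer's work on congruences for weakly holomorphic forms — obtaining a level $4N$ with $\ell\mid N$ and a holomorphic modular form $G=\sum_{n\geq0}b(n)q^n\in M_{\lambda'+\frac12}(\Gamma_0(4N))$ with $\ZZ_{(\ell)}$-coefficients (after clearing a denominator prime to $\ell$, integral coefficients) such that
\[
b(n)\equiv a(\ell^s n)\pmod{\ell^j}\qquad(n\geq 0).
\]
Thus $\widetilde G$ is the mod-$\ell$ reduction of $f|U_{\ell^s}$, which by the first paragraph is not of the shape \eqref{thetaform}. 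Write $G=\mathcal E+g$ with $\mathcal E$ the Eisenstein component and $g\in S_{\lambda'+\frac12}(\Gamma_0(4N))$ the cuspidal component. Since $\lambda'\geq1$, every square-supported half-integral weight form of weight $\lambda'+\frac12$ is a Shimura theta series and hence cuspidal; so the coefficients of $\mathcal E$ are generalized divisor sums and special $L$-values, which are not supported on finitely many square classes. If $\widetilde g$ is not of the shape \eqref{thetaform}, apply the Bruinier–Ono theorem to $g$ with modulus $\ell^j$; if it is, the square-supported part of $\widetilde g$ perturbs only $O(\sqrt X)$ of the coefficients $b(n)$ with $n\le X$, and on the complementary set $b(n)\equiv \mathcal E$'s (non-lacunary) coefficient modulo $\ell$, so in either case the coefficients $b(n)$ are well-distributed modulo $\ell^j$: for every residue $r$,
\[
\#\{0\leq n\leq Y : b(n)\equiv r \pmod{\ell^j}\}\ \gg_{r,\ell^j}\ \sqrt Y/\log Y\ \ (r\not\equiv 0),\qquad \gg_{r,\ell^j}\ Y\ \ (r\equiv 0).
\]

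Finally I would descend to $f$. From $b(n)\equiv a(\ell^s n)\pmod{\ell^j}$ and the inclusion $\{\ell^s n : n\le X/\ell^s\}\subseteq\{N\le X\}$, a set of $n\le Y$ of size $c\,\sqrt Y/\log Y$ on which $b(n)\equiv r$ yields a set of $N\le \ell^s Y$ of size $c\,\sqrt Y/\log Y\gg_s \sqrt{\ell^s Y}/\log(\ell^s Y)$ on which $a(N)\equiv r\pmod{\ell^j}$, and likewise with $Y$ in place of $\sqrt Y/\log Y$ when $r\equiv 0$; hence the coefficients of $f$ are well-distributed modulo $\ell^j$, and since $j$ was arbitrary, Theorem~\ref{main2} follows. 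The main obstacle is the reduction in the second paragraph: carrying out the pole-clearing step while keeping a controlled level, integral (or $\ell$-integral) coefficients, and a weight $\lambda'+\frac12$ with $\lambda'\geq1$ in the same residue class modulo $\frac{\ell-1}{2}$, and then isolating the Eisenstein part cleanly enough that the only possible square-supported contamination comes from Shimura theta series — which are cuspidal once $\lambda'\geq1$ and therefore harmless for the asymptotics. Everything else is bookkeeping around Theorem~\ref{main1} and the quoted theorem of Bruinier and Ono.
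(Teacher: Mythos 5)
Your opening move (using Theorem \ref{main1} in contrapositive form to see that no $f|U_{\ell^m}$ has the shape \eqref{thetaform} mod $\ell$) and your final transfer from the coefficients $a(\ell^s n)$ back to $a(n)$ both match the paper, and a Treneer-type reduction of $f|U_{\ell^s}$ to a holomorphic form modulo $\ell^j$ is indeed available. The genuine gap is in the middle: Theorem \ref{AB} (Bruinier--Ono) applies only to \emph{cusp} forms, and your handling of the Eisenstein component $\mathcal{E}$ of $G$ is not a proof. Concretely: (i) even if the cuspidal part $g$ has well-distributed coefficients mod $\ell^j$, nothing follows for $b(n)=c_{\mathcal E}(n)+c_g(n)$, because the Eisenstein coefficients shift residue classes in a way you do not control; (ii) in your second case you need the coefficients of $\mathcal E$ themselves to hit \emph{every} nonzero residue class mod $\ell^j$ at least $\sqrt X/\log X$ times, and ``non-lacunary, divisor-sum-like'' does not give this --- for weight $3/2$ this is a statement about Hurwitz class numbers in residue classes mod $\ell^j$, which is far from bookkeeping and is not a consequence of anything quoted in the paper; (iii) the dichotomy of Theorem \ref{AB} only yields information about $\widetilde g$ modulo $\ell$, so in your second case you have no control mod $\ell^j$ for $j\geq 2$; and (iv) the appeal to Vign\'eras (``square-supported $\Rightarrow$ Shimura theta series, hence cuspidal'') is a characteristic-zero theorem and does not apply to mod $\ell$ reductions --- the failure of Vign\'eras mod $\ell$ is precisely the phenomenon that Theorem \ref{ABC} quantifies.

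The paper avoids all of this by applying Theorem \ref{AB} to the honest cusp forms $f_{m,j}\equiv\sum_{\ell\nmid n}a(\ell^m n)q^n\pmod{\ell^j}$ provided by Theorem \ref{weakholo}(2), and the real content of the proof is Lemma \ref{family}: these cusp forms cannot \emph{all} have the shape \eqref{thetaform} mod $\ell$. That is where Theorem \ref{main1} is used (the holomorphic representative $g$ of $f|U_{\ell^{m_0}}$ mod $\ell$ is not of theta type, so its coefficients live on infinitely many square classes), and it is completed by a filtration-growth argument: if every $f_{m,j}$ were of theta type, one could manufacture from $g$ a form $h$ supported on powers $q^{\ell^{m'}n}$ and deduce $\omega_\ell(g^2)\geq\ell^{m'}$ for every $m'$, a contradiction. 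Your proposal has no substitute for this step: the danger that passing to the cuspidal piece (equivalently, deleting the constant term and the $\ell$-divisible part) converts a non-theta-type form into a theta-type one is exactly the difficulty, and pushing it onto unproved distribution properties of the Eisenstein part does not resolve it. The $r\equiv 0$ case is fine (it is Treneer's theorem, as in the paper), but as written the $r\not\equiv 0$ case is not established.
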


\begin{rmk}
It was proved in \cite{C2} that if the Fourier coefficients of a weakly holomorphic modular form of weight $\lambda+\frac12$ on $\Gamma_0(4N)$ with non-zero constant term modulo $\ell$ are not well-distributed modulo $\ell^j$, then $\lambda \equiv 0\ \text{or}\ 1 \pmod{\frac{\ell-1}{2}}$. Thus, Theorem \ref{main2} improves this result in the case of $\Gamma_0(4)$.
\end{rmk}

Let $J(z):=j(z)-744$ be the normalized Hauptmodul for $\mathrm{SL}_2(\mathbb{Z})$, where $j(z)$ is the modular $j$-invariant.
Let $d$ be a positive integer with $d\equiv 0$ or $3\ (\m\ 4)$. We denote by $\mathcal{Q}_d$ the set of positive definite binary quadratic forms  of discriminant $-d$ with the usual action of $\SL_2(\ZZ)$. For a binary form $Q \in \mathcal{Q}_d$, let $\alpha_Q$ be the unique root of $Q(x,1)$ in $\HH$. The values $J(\alpha_Q)$ are called singular moduli, which play important roles in number theory. For example, singular moduli generate  Hilbert class fields of imaginary quadratic fields.
We define the modular trace function ${\bf t}(d)$ by
\begin{equation}
{\bf{t}}(d) := \sum_{Q\in\mathcal{Q}_d/\SL_2(\ZZ)} \frac{1}{\omega_Q}J(\alpha_Q),
\end{equation}
where $\omega_Q = |\mathrm{PSL}_2(\ZZ)_Q|$.
In \cite{Zag}, Zagier showed that
\[
q^{-1} -2 - \sum_{d>0} {\bf t}(d)q^d
\]
is a weakly holomorphic modular form of weight $\frac32$ on $\Gamma_0(4)$.
The congruence properties for traces of singular moduli were studied in several articles such as \cite{AO}, \cite{Bo}, \cite{G}, and \cite{T}.
In the following corollary, we consider the distribution of traces of singular moduli modulo powers of a prime $\ell\geq 5$.

\begin{cor}
If $\ell \geq 5$ is a prime, then  for each positive integer $j$
$$\#\{ 0 \leq d \leq X \; | \; {\bf t} (d)\equiv r \pmod{\ell^j} \} \gg_{r, \ell^j}
\left\{
\begin{array}{ll}
{\sqrt{X}}/{\log X}  & \text{ if }  r \not \equiv 0 \pmod{\ell^j},   \\
X                        & \text{ if } r \equiv 0 \pmod{\ell^j}.
\end{array}
\right.$$
\end{cor}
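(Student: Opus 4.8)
The plan is to read the corollary off of Theorem \ref{main2} applied to Zagier's generating series for traces of singular moduli. Set
\[
f(z) := -\Bigl(q^{-1} - 2 - \sum_{d>0}{\bf t}(d)q^d\Bigr) = -q^{-1} + 2 + \sum_{d>0}{\bf t}(d)q^d .
\]
By Zagier's theorem the series $q^{-1}-2-\sum_{d>0}{\bf t}(d)q^d$ lies in $M^!_{3/2}$, and since $M^!_{3/2}$ is a complex vector space we get $f\in M^!_{\lambda+\frac12}$ with $\lambda=1$. The coefficients of $f$ are rational integers: each singular modulus $J(\alpha_Q)$ is an algebraic integer, and each ${\bf t}(d)$ is at the same time a rational number, hence a rational integer. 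Thus $f=\sum_{n\gg-\infty}a(n)q^n$ with $a(n)\in\ZZ$, $a(0)=2$, $a(d)={\bf t}(d)$ for all $d\geq 1$ (and $a(d)=0={\bf t}(d)$ automatically when $d\not\equiv 0,3\pmod 4$, since then $\mathcal{Q}_d=\emptyset$).

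First I would verify the two hypotheses of Theorem \ref{main2}. The constant term $a(0)=2$ is nonzero modulo $\ell$ for every prime $\ell\geq 5$. For the condition on the weight, observe that $\ell\geq 5$ forces $\frac{\ell-1}{2}\geq 2$, so $0<\lambda=1<\frac{\ell-1}{2}$ and therefore $\lambda\not\equiv 0\pmod{\frac{\ell-1}{2}}$. This is precisely the point at which the hypothesis $\ell\geq 5$ is needed: for $\ell=3$ one would have $\frac{\ell-1}{2}=1$ and the hypothesis would fail. Hence Theorem \ref{main2} applies and yields, for each positive integer $j$, that the Fourier coefficients of $f$ are well-distributed modulo $\ell^j$, i.e.\ $\#\{0\le n\le X: a(n)\equiv r\pmod{\ell^j}\}\gg_{r,\ell^j}\sqrt{X}/\log X$ if $r\not\equiv 0$ and $\gg_{r,\ell^j}X$ if $r\equiv 0$.

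It then remains only to transfer this to $\{{\bf t}(d)\}$. Since $a(d)={\bf t}(d)$ for every $d\geq 1$, the two sets $\{0\le d\le X: a(d)\equiv r\pmod{\ell^j}\}$ and $\{0\le d\le X: {\bf t}(d)\equiv r\pmod{\ell^j}\}$ differ in at most the single index $d=0$; an additive $O(1)$ discrepancy does not affect a lower bound of the shape $\gg\sqrt{X}/\log X$ or $\gg X$, so the inequalities asserted in the corollary follow immediately.

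I do not anticipate a genuine obstacle here, as all the mathematical content is carried by Theorem \ref{main2}; what remains is routine bookkeeping. The only things to pin down are that Zagier's form really sits in $M^!_{\lambda+\frac12}$ with the conventions of this paper — namely that it transforms with the correct power of the $\theta$-multiplier on $\Gamma_0(4)$ and trivial Nebentypus, which is classical — that its Fourier coefficients are integral, and that the one-term boundary discrepancy at $d=0$ (and the convention that ${\bf t}(d)=0$ for $d\not\equiv 0,3\pmod 4$) is harmless for the asymptotics. If one prefers to phrase the count directly in terms of Zagier's series $q^{-1}-2-\sum_{d>0}{\bf t}(d)q^d$ rather than its negative, one simply replaces $r$ by $-r$ throughout, which is again immaterial since $r$ ranges over all residues.
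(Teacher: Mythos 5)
Your proposal is correct and is exactly the argument the paper intends: apply Theorem \ref{main2} to (the negative of) Zagier's weight-$\frac32$ form on $\Gamma_0(4)$, noting that the constant term $\pm 2$ is nonzero modulo $\ell$ and that $\lambda=1\not\equiv 0\pmod{\frac{\ell-1}{2}}$ precisely because $\ell\geq 5$, then transfer the well-distribution statement to ${\bf t}(d)$ (up to replacing $r$ by $-r$ and the harmless $d=0$ term). One small quibble: your justification of the integrality of ${\bf t}(d)$ (``algebraic integer plus rational, hence a rational integer'') overlooks the weights $1/\omega_Q$, which are nontrivial for $d=3,4$; integrality does hold (it is part of Zagier's theorem, and the two exceptional discriminants can be checked directly), so this does not affect the argument.
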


The remainder of the paper is organized as follows. In Section \ref{section2}, we introduce some preliminaries for the filtration and  distribution of the Fourier coefficients of modular forms of half-integral weight. In Section \ref{section3}, we prove Theorems \ref{main1} and \ref{main2}.

\section{Preliminaries} \label{section2}
In this section, we introduce some notions and properties concerning  mod $\ell$ (weakly holomorphic) modular forms and prove some lemmas needed to prove the main theorems. Especially, for $\Gamma_0(4)$, we extend the properties for the filtration of a modular form of integral weight to a modular form of half-integral weight.

\subsection{Modular forms}
In this subsection, we fix some notations about modular forms.
If $d$ is an odd prime, then let $\left(\frac cd\right)$ be the usual Legendre symbol.
For a positive odd integer $d$, we define $\left(\frac cd\right)$ multiplicatively.
For a negative odd integer $d$, let
\begin{equation*}
\left(\frac cd\right) =
\begin{cases}
\left(\frac{c}{|d|}\right) & \text{if $d<0$ and $c>0$},\\
-\left(\frac{c}{|d|}\right) & \text{if $d<0$ and $c<0$}.
\end{cases}
\end{equation*}
Furthermore, let $\left(\frac{0}{\pm 1}\right) = 1$.
For an odd integer $d$, we define $\epsilon_d$ by
\begin{equation*}
\epsilon_d =
\begin{cases}
1 & \text{if $d\equiv 1 \pmod{4}$},\\
i & \text{if $d \equiv 3  \pmod{4}$}.
\end{cases}
\end{equation*}
In this paper, we use the convention: $z = |z|e^{i\mathrm{arg}(z)},\ -\pi < \mathrm{arg}(z) \leq \pi$.
For a function $f$ on $\HH$ and $\gamma = \sm a&b\\c&d\esm\in \Gamma_0(4)$, we define the slash operator
\begin{equation*}
(f|_k \gamma)(z) =
\begin{cases}
(cz+d)^{-k} f(\gamma z) & \text{if $k \in 2\ZZ$},\\
\chi_{-4}(d)^{-1}(cz+d)^{-k}f(\gamma z) & \text{if $k\in 2\ZZ+1$},\\
\left(\frac cd\right)^{-2k}\epsilon_d^{2k}(cz+d)^{-k}f(\gamma z) & \text{if $k\in \ZZ+\frac12$},
\end{cases}
\end{equation*}
where $\chi_{-4}(d) = \left(\frac{-4}{d}\right)$.

Let $k\in \frac12\ZZ$ and $N$ be a positive integer.
Let $\chi$ be a Dirichlet character modulo $4N$.
A function $f$ on $\HH$ is called a weakly holomorphic modular form of weight $k$ on $\Gamma_0(4N)$ with character $\chi$ if it is holomorphic on $\HH$, meromorphic at the cusps, and satisfies
\[
f|_k \gamma = \chi(d)f
\]
for all $\gamma = \sm a&b\\c&d\esm \in \Gamma_0(4N)$.
We say that $f$ is a holomorphic modular form if it is holomorphic at the cusps, and a cusp form if it vanishes at the cusps.
We denote by $M_k$ the space of holomorphic modular forms of weight $k$ on $\Gamma_0(4)$ with  trivial character.

\subsection{Filtration for mod $\ell$ modular forms of half-integral weight}
Let $N$ be a positive integer with $\ell \nmid N$. The theory of mod $\ell$ modular forms of integral weight was established by Serre \cite{Ser} and Swinnerton-Dyer \cite{Swi} for modular forms of level $1$.  Their results were generalized to modular forms of higher level by Katz \cite{Kat} and Gross \cite{Gro}. For a modular form $f$ on $\Gamma_0(N)$ with $\ell$-integral coefficients, the filtration $\omega_{\ell}(f)$ of $f$ modulo $\ell$ is defined to be the infimum of  $k \in \ZZ_{\geq0}$ such that there is a modular form $g$ of weight $k$ on $\Gamma_0(N)$ with $\ell$-integral rational Fourier coefficients satisfying $f \equiv g \pmod{\ell}$. The theory of mod $\ell$ modular forms of integral weight gives several properties for the filtration of mod $\ell$ modular forms.

In the following theorem, we summarize some well-known properties of the filtration for modular forms of integral weight.
For the details, we refer to \cite[Section 1]{Joch}.

\begin{thm} \label{property_integral}
Let $\ell\geq 5$ be a prime and $k$ be an integer. Let $f$ be a modular form in $M_k$.
Then, we have the following.
\begin{enumerate}
\item $\omega_{\ell}(f) \equiv k \pmod{\ell-1}$.

\item $\omega_{\ell}(f^m) = m \cdot \omega_{\ell}(f)$ for a positive integer $m$.

\item $\omega(f|U_{\ell}) \equiv \omega(f) \pmod{\ell-1}$.

\item  $\omega_{\ell}(f|U_\ell) \leq \ell + \frac{\omega_{\ell}(f)-1}{\ell}$.
\end{enumerate}
\end{thm}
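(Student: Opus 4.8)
The plan is to derive all four statements from the structure theory of mod $\ell$ modular forms of level $N$ with $\ell\nmid N$ (here $N=4$, and $\ell\geq 5$ guarantees $\ell\nmid N$), two pillars of which I would set up first. The first pillar is the Hasse invariant $A$, a mod $\ell$ modular form of weight $\ell-1$ represented by the Eisenstein series $E_{\ell-1}$, whose $q$-expansion satisfies $E_{\ell-1}\equiv 1\pmod\ell$; this is exactly where $\ell\geq 5$ is used, through the von Staudt--Clausen computation of $B_{\ell-1}$. The second pillar is the theta operator $\theta=q\frac{d}{dq}$: writing $\theta f=\vartheta_k f+\frac{k}{12}E_2 f$ with $\vartheta_k f$ holomorphic of weight $k+2$, and using the Kummer congruence $\frac{B_{\ell+1}}{\ell+1}\equiv\frac{B_2}{2}\pmod\ell$, which yields $E_{\ell+1}\equiv E_2\pmod\ell$, one sees that $\theta$ carries mod $\ell$ forms of weight $k$ to mod $\ell$ forms of weight $k+\ell+1$, so that $\omega_\ell(\theta f)\leq\omega_\ell(f)+\ell+1$. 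The deep input, which I would quote from the Serre--Swinnerton-Dyer--Katz theory, is the fundamental theorem that a nonzero mod $\ell$ form determines its weight modulo $\ell-1$; equivalently, that $A$ has simple zeros precisely at the supersingular points of the mod $\ell$ modular curve, so that for $\tilde f\neq 0$ given in weight $k$ one has $\omega_\ell(\tilde f)=k-(\ell-1)\cdot\min_s\ord_s(\tilde f)$, the minimum taken over supersingular points $s$.

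Property (1) is then immediate: if $f\in M_k$ and $\omega_\ell(f)=w$ is realized by $g\in M_w$ with $g\equiv f\pmod\ell$, the nonzero reduction $\tilde f=\tilde g$ is represented in both weights $k$ and $w$, so the fundamental theorem forces $w\equiv k\pmod{\ell-1}$. For property (2), choose $g\in M_k$ with $k=\omega_\ell(f)$ representing $\tilde f$; minimality of $k$ means $\tilde g$ is not divisible by $A$, i.e. $\min_s\ord_s(\tilde g)=0$. Since $\ord_s(\tilde g^m)=m\,\ord_s(\tilde g)$, the minimum over supersingular points of $\ord_s(\tilde g^m)$ is again $0$, so $\tilde f^m=\tilde g^m$ suffers no filtration drop from its weight $mk$, giving $\omega_\ell(f^m)=mk=m\,\omega_\ell(f)$; the opposite inequality is trivial from $g^m\in M_{mk}$.

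Properties (3) and (4) I would obtain from a single $q$-expansion identity. Since $n^{\ell-1}\equiv 1$ or $0\pmod\ell$ according as $\ell\nmid n$ or $\ell\mid n$, one has $\theta^{\ell-1}f\equiv\sum_{\ell\nmid n}a(n)q^n\pmod\ell$, whence, with $V$ the operator $q\mapsto q^\ell$,
\[
V(U_\ell f)\equiv f-\theta^{\ell-1}f\pmod\ell .
\]
The right-hand side is a mod $\ell$ modular form whose weight lies in the class $k\pmod{\ell-1}$ (both $f$ and $\theta^{\ell-1}f$ do, as $(\ell-1)(\ell+1)\equiv 0$), so by (1) its filtration is $\equiv\omega_\ell(f)\pmod{\ell-1}$; on the other hand $V(g)\equiv g^\ell\pmod\ell$ for $g\in\mathbb{F}_\ell[[q]]$, so by (2) that filtration equals $\ell\,\omega_\ell(U_\ell f)$. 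Comparing and using $\ell\equiv 1\pmod{\ell-1}$ gives $\omega_\ell(U_\ell f)\equiv\omega_\ell(f)\pmod{\ell-1}$, which is (3). For (4) the same two evaluations, together with subadditivity of the filtration within a fixed weight class and the iterated theta bound, yield
\[
\ell\,\omega_\ell(U_\ell f)=\omega_\ell\bigl(f-\theta^{\ell-1}f\bigr)\leq\max\bigl(\omega_\ell(f),\,\omega_\ell(\theta^{\ell-1}f)\bigr)\leq\omega_\ell(f)+(\ell-1)(\ell+1),
\]
and dividing by $\ell$ gives $\omega_\ell(U_\ell f)\leq\ell+\frac{\omega_\ell(f)-1}{\ell}$.

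The main obstacle is entirely concentrated in the two nonformal inputs cited in the first paragraph: the fundamental theorem on the weight modulo $\ell-1$, underlying (1), and the simple-zeros description of the filtration at supersingular points, underlying (2). Once these are granted, (3) and (4) are purely formal consequences of the theta operator and the displayed $U_\ell$-identity, so the only hand computation I would carry out is that of the congruences $E_{\ell-1}\equiv 1$ and $E_{\ell+1}\equiv E_2$ together with the elementary identity for $\theta^{\ell-1}$.
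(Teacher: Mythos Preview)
The paper does not prove this theorem; it is stated as a summary of well-known facts with a reference to \cite[Section~1]{Joch}. Your proposal supplies what the paper omits, and does so along exactly the classical Serre--Swinnerton-Dyer--Jochnowitz lines: the Hasse invariant represented by $E_{\ell-1}$ and the structure theorem for mod $\ell$ modular forms give (1) and (2), and the $q$-expansion identity $V(U_\ell f)\equiv f-\Theta^{\ell-1}f\pmod\ell$ (your $\theta=q\,d/dq$ is the paper's $\Theta$) yields (3) and (4). The sketch is correct.

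One point you leave implicit is worth making explicit: to speak of $\omega_\ell(f|U_\ell)$ as a filtration on $\Gamma_0(4)$ you must first know that $f|U_\ell$ is congruent modulo $\ell$ to a holomorphic form on $\Gamma_0(4)$, since a priori $f|U_\ell$ lives on $\Gamma_0(4\ell)$. The standard device is $f|U_\ell\equiv f|T_\ell\pmod\ell$ when the weight is at least $2$ (or the trace argument in general), and this step is logically prior to writing $\omega_\ell(V(U_\ell f))=\ell\cdot\omega_\ell(U_\ell f)$ via property (2). Once that is in place, your derivation of (3) and (4) goes through as written.
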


In this subsection, for $\Gamma_0(4)$, we extend the properties for the filtration of a modular form of integral weight to a modular form of half-integral weight. Suppose that $f$ is a modular form of weight in  $\ZZ+\frac12$ on $\Gamma_0(4)$ with $\ell$-integral rational Fourier coefficients.
The filtration $\omega_{\ell}(f)$ of $f$ modulo $\ell$ is defined to be the infimum of  $k \in \frac12 \ZZ_{\geq0}$ such that there is a modular form $g \in M_k$ with $\ell$-integral rational Fourier coefficients satisfying $f \equiv g \pmod{\ell}$. Then, the filtration satisfies some properties that are analogous to the properties of filtration for modular forms of integral weight.

\begin{prop} \label{filtration}
Let $\ell\geq 5$ be a prime, and $k, k_1$, and $k_2$ be elements of $\frac12\ZZ$.
\begin{enumerate}
\item For a modular form $f\in M_{k}$ and a positive integer $m$,
\[
\omega_\ell(f \theta^m) = \omega_\ell(f) + \frac{m}{2}.
\]


\item For a modular form $f\in M_k$ and a positive integer $m$,
\[
\omega_\ell(f^m) = m \cdot \omega_\ell(f).
\]

\item For a modular form $f\in M_{\lambda+\frac12}$,  there is a modular form $g$ on $\Gamma_0(4)$ such that
$g \equiv f|U_{\ell} \pmod{\ell}$. 
Moreover, we have
\[
\omega_{\ell}(f|U_{\ell}) - \omega_{\ell}(f) \equiv 0 \pmod{\frac{\ell-1}{2}}.
\]

\item For a modular form $f\in M_{\lambda+\frac12}$,
\[
\lambda \equiv \omega_{\ell}(f) - \frac12 \pmod{\ell-1}.
\]
\end{enumerate}
\end{prop}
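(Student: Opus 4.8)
The plan is to reduce all four assertions to the integral‑weight case, i.e.\ to Theorem~\ref{property_integral}, by exploiting the classical structure of holomorphic modular forms on $\Gamma_0(4)$: one has $\theta^2\in M_1$, and multiplication by $\theta$ is an isomorphism $M_\lambda\to M_{\lambda+\frac12}$ for every $\lambda\ge 0$. Since $\theta=1+2q+\cdots$ has constant term $1$, this isomorphism respects $\ell$‑integral structures and commutes with reduction modulo $\ell$: if $f\in M_{\lambda+\frac12}$ has $\ell$‑integral coefficients then so does $f/\theta\in M_\lambda$, and $\tilde f=\theta\cdot\widetilde{(f/\theta)}$ in $\mathbb F_\ell[[q]]$. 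The heart of the matter is the following lemma, of which parts (1), (2) and (4) are formal consequences.

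\emph{Key Lemma.} For $F\in M_k$ with $k\in\frac12\ZZ$ and $\ell$‑integral coefficients, $\omega_\ell(\theta F)=\omega_\ell(F)+\frac12$. The inequality $\omega_\ell(\theta F)\le\omega_\ell(F)+\frac12$ is immediate, since a minimal‑weight form congruent to $F$ yields, after multiplication by $\theta$, a form of weight $\omega_\ell(F)+\frac12$ congruent to $\theta F$. For the reverse inequality, suppose $\theta F\equiv h\pmod\ell$ with $h\in M_w$; one must produce a modular form of weight $w-\frac12$ congruent to $F$ (equivalently, exhibit the $q$‑series $h/\theta=F$ as the reduction of such a form). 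When $w$ is half‑integral this is immediate: $h=\theta h'$ with $h'\in M_{w-\frac12}$ having $\ell$‑integral coefficients, and $h'\equiv h/\theta\equiv F\pmod\ell$. The remaining case, $w\in\ZZ$ — equivalently, the assertion that multiplication by $\theta^2\in M_1$ raises the integral‑weight filtration by exactly $1$, and that a nonzero mod $\ell$ form of half‑integral weight cannot coincide with an integral‑weight one of smaller weight — is where the real work lies; I would settle it using the $q$‑expansion principle together with the structure of the ring of mod $\ell$ modular forms on $\Gamma_0(4)$ (the divisor of $\theta$ being supported on the cusps other than $\infty$), with the customary additional care at small weights. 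This is the step I expect to be the main obstacle.

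Granting the Key Lemma, part (1) follows by induction on $m$, since $\omega_\ell(f\theta^m)=\omega_\ell\!\bigl(\theta\cdot f\theta^{m-1}\bigr)=\omega_\ell(f\theta^{m-1})+\frac12$. For part (2): the case $k\in\ZZ$ is Theorem~\ref{property_integral}(2); for $k\in\ZZ+\frac12$ write $f=\theta h$ with $h\in M_{k-\frac12}$ having $\ell$‑integral coefficients, so that $f^m=\theta^m h^m$ and, using part (1), Theorem~\ref{property_integral}(2) and the Key Lemma,
\[
\omega_\ell(f^m)=\omega_\ell(h^m)+\tfrac m2=m\,\omega_\ell(h)+\tfrac m2=m\bigl(\omega_\ell(h)+\tfrac12\bigr)=m\,\omega_\ell(\theta h)=m\,\omega_\ell(f).
\]
For part (4): write $f=\theta g$ with $g\in M_\lambda$ having $\ell$‑integral coefficients; then $\omega_\ell(f)=\omega_\ell(g)+\frac12$ by the Key Lemma, while $\omega_\ell(g)\equiv\lambda\pmod{\ell-1}$ by Theorem~\ref{property_integral}(1), whence $\omega_\ell(f)-\frac12\equiv\lambda\pmod{\ell-1}$.

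Finally, for part (3): from $(f|U_\ell)(z)=\frac1\ell\sum_{j=0}^{\ell-1}f\bigl(\frac{z+j}{\ell}\bigr)$ one sees that $f|U_\ell$ is a holomorphic modular form of weight $\lambda+\frac12$ on a congruence group of the form $\Gamma_0(4\ell^2)$; that it is congruent modulo $\ell$ to a modular form on $\Gamma_0(4)$ is the half‑integral‑weight analogue of the fact underlying Theorem~\ref{property_integral}(3)–(4), proved by the same descent of a mod $\ell$ modular form from level $4\ell^2$ to level $4$ (alternatively, deduced from the integral‑weight case via $\tilde f=\theta\tilde g$ and the congruence $\theta(z)^\ell\equiv\theta(\ell z)\pmod\ell$). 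The filtration congruence is then the half‑integral analogue of Theorem~\ref{property_integral}(3): writing a level‑$4$ form congruent to $f|U_\ell$ as $\theta\tilde G$ with $G$ of integral weight on $\Gamma_0(4)$, the Key Lemma gives $\omega_\ell(f|U_\ell)=\omega_\ell(G)+\frac12$ and $\omega_\ell(f)=\omega_\ell(g)+\frac12$, and transporting Theorem~\ref{property_integral}(3) through the $\theta$‑correspondence gives $\omega_\ell(G)\equiv\omega_\ell(g)\pmod{\ell-1}$; subtracting yields $\omega_\ell(f|U_\ell)-\omega_\ell(f)\equiv 0\pmod{\ell-1}$, a fortiori $\equiv 0\pmod{\frac{\ell-1}{2}}$.
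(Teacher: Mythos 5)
Your reduction of parts (1), (2), (4) to a ``Key Lemma'' is the right skeleton, but the proposal has two genuine problems. First, the hard case of your Key Lemma --- showing that if $h\in M_w$ with $w\in\ZZ$ satisfies $h\equiv\theta F\pmod\ell$ then $F$ is congruent to a holomorphic form of weight $w-\tfrac12$ --- is exactly the substantive content of part (1), and you explicitly leave it unproved (``the main obstacle''). The paper closes precisely this gap: it forms the weakly holomorphic quotient $f_0=F/\theta^m$, whose only possible pole is at the cusp $\tfrac12$; applying the $q$-expansion principle to $f_0^2$ and $f^2$ at that cusp shows the principal part of $f_0$ at $\tfrac12$ is divisible by $\ell$, and then one subtracts $\ell\,\theta^{2k_0-m}P(g)$, where $g=\eta(z)^8\eta(4z)^{16}/\eta(2z)^{24}$ is Ligozat's modular function with a simple pole only at $\tfrac12$, to obtain a holomorphic form of weight $\omega_\ell(f\theta^m)-\tfrac m2$ congruent to $f$. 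Without some such argument your whole scheme (including the divisibility $h=\theta h'$ with $\ell$-integral quotient, which itself leans on the structure of the ring of forms on $\Gamma_0(4)$) has no foundation.

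Second, your argument for part (3) is actually incorrect: the intermediate claim $\omega_\ell(G)\equiv\omega_\ell(g)\pmod{\ell-1}$, hence $\omega_\ell(f|U_\ell)-\omega_\ell(f)\equiv 0\pmod{\ell-1}$, is false. Take $f=\theta$: then $\theta|U_\ell\equiv\theta^\ell\pmod\ell$, so $\omega_\ell(f|U_\ell)-\omega_\ell(f)=\tfrac\ell2-\tfrac12=\tfrac{\ell-1}2$, which is not divisible by $\ell-1$. The flaw is that $U_\ell$ does not commute with multiplication (or division) by $\theta$ modulo $\ell$; it commutes with multiplication by $\theta|V_\ell\equiv\theta^\ell$, via $\bigl(F\cdot(G|V_\ell)\bigr)|U_\ell=(F|U_\ell)\,G$. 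This is why the paper works with $f\theta^\ell$ (which has integral weight), gets $(f\theta^\ell)|U_\ell\equiv(f|U_\ell)\,\theta\pmod\ell$, and then Theorem~\ref{property_integral}(3) together with part (1) yields $\omega_\ell(f|U_\ell)-\omega_\ell(f)=\tfrac{\ell-1}2(2t+1)$, an \emph{odd} multiple of $\tfrac{\ell-1}2$ --- so only the mod $\tfrac{\ell-1}{2}$ congruence of the statement can hold. The existence of a level-$4$ form congruent to $f|U_\ell$, which you only gesture at via ``descent from level $4\ell^2$,'' is obtained in the paper from the integral-weight fact applied to $f\theta^\ell$ followed by the same divide-by-$\theta$-and-correct argument as in (1); once you have that machinery, part (3) should be rerun along those lines rather than through the $\theta$-correspondence.
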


\begin{rmk}
Tupan \cite{Tu} studied the ring structure of mod $\ell$ modular forms on $\Gamma_1(4)$. One can give another proof of Proposition \ref{filtration} by using the method of Swinnerton-Dyer \cite{Swi} and the result of Tupan.
\end{rmk}

\begin{proof}
(1) By the definition of $\omega_\ell(f)$, there is a modular form $\hat{f}$ in $M_{\omega_\ell(f)}$ with $\ell$-integral rational Fourier coefficients such that
\[
f \equiv \hat{f} \pmod{\ell}.
\]
Therefore, we see that
\[
f\theta^m \equiv \hat{f} \theta^m \pmod{\ell}
\]
and the weight of $\hat{f} \theta^m$ is $\omega_\ell(f) + \frac{m}{2}$.
This implies that
\[
\omega_\ell(f\theta^m) \leq \omega_\ell(f) + \frac m2.
\]

On the other hand, we let $k_0 = \omega_\ell(f\theta^m)$.
Then, there is a modular form $F \in M_{k_0}$ with $\ell$-integral rational Fourier coefficients such that
\begin{equation} \label{Fdfn}
F \equiv f \theta^m \pmod{\ell}.
\end{equation}
We define
\[
f_0 = \frac{F}{\theta^m},
\]
which is a weakly holomorphic modular form with $\ell$-integral rational coefficients.
It has a possible pole only at the cusp $\frac12$.
Let us note that $f_0 \equiv f \pmod{\ell}$ by (\ref{Fdfn}). By the q-expansion principle,  the  Fourier expansion of $f_0^2$ at $1/2$ is congruent to that of $f^2$ at $1/2$ modulo $\ell$ (for example, see Ramark 12.3.5 in \cite{Di-Im}). Since $\ell>2$, $f_0$ and $f$ have the same Fourier expansion at $1/2$ modulo $\ell$.  Thus, we see that at the cusp $\frac12$, $f_0$ has  Fourier expansion of the form
\[
\ell \cdot \sum_{n<0} a(n)q^n + O(1)
\]
for some $\ell$-integral coefficients $a(n)$.

Note that from Ligozat \cite{Lig} we find that the function
\begin{equation} \label{modularfunction}
g(z) = \frac{\eta(z)^8 \eta(4z)^{16}}{\eta(2z)^{24}}
\end{equation}
is a modular function on $\Gamma_0(4)$ having a pole only at the cusp $\frac12$.
At the cusp $\frac12$, it has a simple pole whose residue is $1$.
Then, there is a polynomial $P$ with $\ell$-integral coefficients such that
\[
f_0 - \ell \cdot \theta^{2k_0-m} P(g)
\]
is a holomorphic modular form in $M_{k_0 - \frac m2}$.
Moreover, it is congruent to $f$ modulo $\ell$.
This implies that
\[
\omega_\ell(f) \leq \omega_\ell(f\theta^m) - \frac m2.
\]


(2) Let $m'$ be a positive integer such that the weight of $f \theta^{m'}$ is an integer.
Then, by (1) and Theorem \ref{property_integral} (2), we see that
\[
\omega_{\ell}(f^m) + \frac{mm'}{2} = \omega_{\ell}(f^m \theta^{mm'}) = \omega_{\ell}( (f\theta^{m'})^m) = m\cdot \omega_{\ell}(f \theta^{m'}) = m\cdot \omega_{\ell}(f) + \frac{mm'}{2}.
\]
This gives the desired result.

(3) Since $\ell$ is an odd prime, $f\cdot \theta^\ell$ is a modular form of integral weight on $\Gamma_0(4)$.
Then, there is a modular form $F$ of integral weight $k$ on $\Gamma_0(4)$ with $\ell$-integral rational Fourier coefficients
such that
\[
(f\cdot \theta^\ell) | U_\ell \equiv F \pmod{\ell}.
\]
Note that
\begin{equation} \label{thetacomputation2}
(f\cdot \theta^\ell)|U_\ell \equiv (f|U_\ell) (\theta^\ell | U_\ell) \equiv (f|U_\ell) \theta \pmod{\ell}.
\end{equation}
Here, we used the fact that $\theta^\ell | U_\ell \equiv \theta \pmod{\ell}$.
We define
\[
f_0 = \frac{F}{\theta},
\]
which is a weakly holomorphic modular form with $\ell$-integral rational Fourier coefficients.
By the same argument as in the proof of (1), there is a polynomial $P$ with $\ell$-integral coefficients such that
\[
f_0 - \ell \cdot \theta^{2k-1}P(g)
\]
is a holomorphic modular form in $M_{k-\frac12}$, where $g$ is the modular function defined in (\ref{modularfunction}).
Furthermore, we have
\[
f_0 - \ell \cdot \theta^{2k-1}P(g) \equiv f_0 \equiv \frac{F}{\theta} \equiv f|U_\ell \pmod{\ell}.
\]

Now, we consider the filtration of $f|U_{\ell}$.
By (\ref{thetacomputation2}),
we obtain
\begin{equation*}
\omega_\ell((f\cdot \theta^\ell)|U_\ell) = \omega_\ell((f|U_\ell) \cdot \theta).
\end{equation*}
Therefore, by Theorem \ref{property_integral} (3), we see that
\begin{equation} \label{filtrationcomputation2}
\omega_\ell((f|U_\ell)\cdot \theta) = \omega_{\ell}((f\cdot \theta^\ell)|U_\ell) = \omega_{\ell}(f\cdot \theta^\ell) + t(\ell-1)
\end{equation}
for some integer $t$.
By (1) and (\ref{filtrationcomputation2}), we obtain
\[
\omega_{\ell}(f|U_\ell) + \frac12 = \omega_\ell(f) + \frac{\ell}{2} + t(\ell-1).
\]
From this, we have
\[
\omega_{\ell}(f|U_\ell) - \omega_\ell(f) = \frac{\ell-1}{2}(2t+1).
\]

(4) Let $m$ be an integer such that the weight of $f \theta^m$ is an integer.
Then, by Theorem \ref{property_integral} (1), we see that
\[
\omega_{\ell}(f) - \frac12 \equiv \omega_{\ell}(f\theta^m) - \frac{m+1}{2} \equiv \left(\lambda+\frac{m+1}{2}\right)- \frac{m+1}{2}  \equiv  \lambda \pmod{\ell-1}.
\]
\end{proof}

\subsection{Distribution of Fourier coefficients modulo a prime $\ell$}
In this subsection, we review results related to the distribution of the Fourier coefficients of modular forms of half-integral weight.

In \cite{BO}, Bruinier and Ono studied the Fourier coefficients of a modular form of half-integral weight modulo powers of a prime. They proved the following theorem.
\begin{thm} \cite{BO} \label{AB}
Let $N$ be a positive integer and $\chi$ be a real Dirichlet character modulo $4N$.
Let $\lambda$ be a non-negative integer.
Suppose  that $F$ is a cusp form of weight $\lambda+\frac12$ on $\Gamma_0(4N)$ with character $\chi$ and that its Fourier coefficients are integral.
Let $\ell$ be an odd prime and $j$ be a positive integer.
Then, at least one of the following is true:
\begin{enumerate}
\item The Fourier coefficients of $F$ are well-distributed modulo $\ell^j$.
\item $F$ has  Fourier expansion of the form  (\ref{thetaform}).
\end{enumerate}
\end{thm}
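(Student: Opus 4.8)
The plan is to prove the dichotomy in its contrapositive form, following the method of Bruinier and Ono \cite{BO}: assuming that alternative (2) fails, so that $F$ is not supported on finitely many square classes modulo $\ell^j$, I would show that alternative (1) holds, i.e.\ the coefficients are well-distributed modulo $\ell^j$. The organizing principle is to pass from the half-integral weight form $F$ to integral weight forms via the Shimura correspondence, and then to exploit residual Galois representations together with the Chebotarev density theorem. Concretely, for each squarefree $t$ coprime to $4N\ell$ the $t$-th Shimura lift $\mathrm{Sh}_t(F)$ is a cusp form of integral weight $2\lambda$ on $\Gamma_0(2N)$ whose Hecke eigenvalues $\lambda_p$ govern the coefficients in the square class of $t$ through the Shimura relation $a(tp^2)\equiv a(t)\bigl(\lambda_p-\chi(p)\bigl(\tfrac{(-1)^\lambda t}{p}\bigr)p^{\lambda-1}\bigr)\pmod{\ell^j}$. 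Reducing modulo $\ell^j$ attaches to these lifts a finite family of residual representations $\bar\rho_t$, and the arithmetic of $F$ modulo $\ell^j$ is thereby encoded in the Frobenius traces $\lambda_p=\mathrm{tr}\,\bar\rho_t(\mathrm{Frob}_p)$.

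First I would dispose of the residue $r\equiv 0$, where the target is a lower bound of order $X$. By Serre's results \cite{Ser} on the density of vanishing Hecke eigenvalues, there is a set of primes $p$ of positive density whose trivial Frobenius makes the local Hecke relations at $p$ force $a(n)\equiv 0\pmod{\ell^j}$ for all $n$ in a fixed positive-density quadratic-residue class modulo $p$ (here one uses that $2$ is invertible modulo $\ell^j$). A sieve over this positive-density set of primes, exploiting the divergence of $\sum 1/p$ over it, then yields a positive density of $n\le X$ with $a(n)\equiv 0$, establishing the bound $\gg X$. Next, for $r\not\equiv 0\pmod{\ell^j}$, the failure of alternative (2) guarantees infinitely many squarefree $t$ coprime to $4N\ell$ with $a(t)\not\equiv 0\pmod{\ell^j}$, after using $T_{p^2}$ to push a nonzero coefficient down onto its squarefree part. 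Fixing such a $t$, the Chebotarev density theorem applied to $\bar\rho_t$ shows that each trace value in the image of $\bar\rho_t$ is realized by a set of primes of positive density; translating through the Shimura relation, the coefficients $a(tp^2)$ run, with positive frequency, over a full coset of attainable residues. Counting the primes $p\le\sqrt{X/t}$ in the relevant Chebotarev class then produces $\gg_{r}\sqrt{X}/\log X$ integers $n=tp^2\le X$ with $a(n)\equiv r$.

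The main obstacle is to ensure that for every nonzero residue $r$ the attainable set of coefficient values is all of $(\ZZ/\ell^j\ZZ)$, rather than a proper subgroup or coset that happens to miss $r$. This is precisely where the hypothesis that $F$ is not of the form (\ref{thetaform}) must be used decisively: one has to show that if some residue were never attained with the required frequency, then the image of the family $\{\bar\rho_t\}$ would be forced to be so small that $F$ collapses to a linear combination of theta series modulo $\ell^j$, i.e.\ $F$ would satisfy alternative (2). Controlling the image of these residual representations — ruling out reducible or small-image degeneracies uniformly in the square class $t$, and checking that varying $t$ over the infinitely many nonzero square classes sweeps the values $a(t)$ across enough units modulo $\ell^j$ — is the technical heart of the argument. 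A secondary but genuine difficulty is that everything must be carried out compatibly modulo the prime \emph{power} $\ell^j$, not merely modulo $\ell$, and within the half-integral weight (Kohnen plus-space) formalism, so that the Shimura correspondence and the Hecke relations survive reduction; this is the point at which the structural results of Vign\'eras \cite{Vig} and Bruinier \cite{B1} on forms supported on finitely many square classes feed back into the proof.
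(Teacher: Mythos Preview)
The paper does not give its own proof of this statement: Theorem~\ref{AB} is simply quoted from Bruinier--Ono \cite{BO} and used as a black box in Section~\ref{section2}, so there is no proof in the paper to compare your proposal against.

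For what it is worth, your outline does follow the general architecture of the Bruinier--Ono argument (Shimura correspondence, the Hecke relation for $T_{p^2}$ on half-integral weight forms, residual Galois representations, Chebotarev for the $\sqrt{X}/\log X$ bound, and Serre-type density results for the $r\equiv 0$ case). Two points are imprecise, however. First, $F$ is not assumed to be a Hecke eigenform, so there is no single representation $\bar\rho_t$; one must first write $F$ modulo $\ell$ as a combination of eigenforms over an extension of $\mathbb{F}_\ell$ and attach Galois representations to the integral-weight Shimura lifts of those. Second, your description of the ``main obstacle'' inverts the actual logic: Bruinier and Ono do not try to show directly that Frobenius traces sweep out every residue in $\ZZ/\ell^j\ZZ$ for a fixed $t$. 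Rather, they show that if well-distribution fails, then for \emph{every} squarefree $t$ a certain Hecke condition holds, and this forces (via a mod-$\ell$ analogue of the Vign\'eras classification \cite{Vig,B1}) $F$ to be congruent to a finite sum of unary theta series, i.e.\ to satisfy alternative~(2). Your sketch has the right ingredients but blurs the direction in which the contrapositive is actually run.
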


Ahlgren and Boylan \cite{AB} proved  bounds for the weight of a cusp form having  Fourier expansion of the form  (\ref{thetaform}). In the proof, they used the Shimura lift and the theory of Galois representations. In \cite{C}, the first author used only the theory of modular forms modulo $\ell$ to reprove these bounds.
Later, the first author and Kilbourn \cite{CK} improved these bounds.
Since the theory of modular forms modulo $\ell$ can be applied to any holomorphic modular form,  the bounds in  \cite{AB, C, CK}  can be extended to holomorphic modular forms. For the convenience of the reader, we state the result for these bounds.

\begin{thm} \cite{AB, C, CK} \label{ABC}
Suppose that we have the following hypotheses:
\begin{itemize}
\item $\lambda \geq2$ is an integer.

\item $\ell \geq 5$ is a prime.

\item $F$ is a modular form of half-integral weight such that
\[
F(z) = \sum_{n=0}^\infty a(n)q^n \in M_{\lambda+\frac12} \cap \ZZ[[q]].
\]

\item $F \not\equiv 0 \pmod{\ell}$, and there are finitely many square-free integers $n_1, \ldots, n_t$ such that
\[
F(z) \equiv a(0) +  \sum_{i=1}^t \sum_{m=1}^\infty a(n_im^2)q^{n_im^2} \pmod{\ell}.
\]
\end{itemize}
If we write $\lambda = \bar{\lambda} + \iota_{\lambda}(\ell-1)$ with $0\leq \bar{\lambda}\leq \ell-2$,
then the following are true:
\begin{enumerate}
\item[(1)] If $\ell\nmid n_i$ for some $i$, then
\[
\bar{\lambda} \leq 2\iota_{\lambda}+ 1.
\]

\item[(2)] If $\ell | n_i$ for all $i$ and $\bar{\lambda}\leq \frac{\ell-3}{2}$, then
\[
\bar{\lambda} \leq \iota_{\lambda}-\frac{\ell+1}{2}.
\]

\item[(3)] If $\ell | n_i$ for all $i$ and $\bar{\lambda} \geq \frac{\ell-1}{2}$, then
\[
\bar{\lambda} \leq \iota_{\lambda} + \frac{\ell-1}{2}.
\]
\end{enumerate}
\end{thm}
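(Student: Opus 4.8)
The plan is to work entirely with the mod-$\ell$ reduction $\tilde{F}$ and to control its filtration $w:=\omega_\ell(\tilde F)$; the three asserted inequalities are really lower bounds on $w$ in disguise. From the definition of the filtration together with Proposition \ref{filtration}(4) we have $w \equiv \lambda + \frac12 \pmod{\ell-1}$ and $w \le \lambda + \frac12$ (since $F \in M_{\lambda+\frac12}$ realizes the weight $\lambda+\frac12$), so we may write $w = \bar\lambda + \frac12 + r(\ell-1)$ with $0 \le r \le \iota_\lambda$. Because $\iota_\lambda \ge r$, a lower bound of the shape $r \ge \frac{\bar\lambda-1}{2}$ immediately yields $\bar\lambda \le 2\iota_\lambda + 1$, and analogous translations handle (2) and (3). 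The engine for producing such lower bounds is the theta operator $\Theta = q\frac{d}{dq}$. Since the filtration theory of $\Theta$ is developed for integral weight, I would first multiply by a suitable power of $\theta$ to pass to integral weight, using Proposition \ref{filtration}(1) to transport filtration statements back and forth; one then imports from the integral-weight theory (Theorem \ref{property_integral} and \cite{Joch}) that $\omega_\ell(\Theta g) \le \omega_\ell(g) + \ell + 1$, with equality exactly when $\ell \nmid \omega_\ell(g)$, and that any deficiency is a positive multiple of $\ell-1$.

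Next I would set up the dichotomy coming from the support hypothesis. Suppose first that every $n_i \equiv 0 \pmod\ell$. Then the entire support of $\tilde F$ lies in $\{0\}\cup\ell\ZZ$, so $\Theta\tilde F \equiv 0$, and hence $\tilde F \equiv (\tilde F|U_\ell)^\ell$ in $\mathbb{F}_\ell[[q]]$ by the Frobenius identity for $\ell$-th powers. By Proposition \ref{filtration}(3) the form $\tilde F|U_\ell$ is congruent to a half-integral weight modular form, and by the multiplicativity of the filtration (Proposition \ref{filtration}(2)) we get $w = \ell\cdot\omega_\ell(\tilde F|U_\ell)$. Writing $n_i = \ell n_i'$ (legitimate since $n_i$ is square-free), the form $\tilde F|U_\ell$ is supported on the square classes $n_i'$, which are now all prime to $\ell$. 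Thus this case reduces, at weight scaled down by a factor of $\ell$, to the complementary case, and the split in the hypotheses of (2) and (3) between $\bar\lambda \le \frac{\ell-3}{2}$ and $\bar\lambda \ge \frac{\ell-1}{2}$ records on which side of $\ell/2$ the reduced weight of $\tilde F|U_\ell$ lies.

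The heart of the argument is therefore the case that some $n_i$ is prime to $\ell$, which must give $\bar\lambda \le 2\iota_\lambda + 1$. Here the quadratic projections $\Theta^{(\ell-1)/2}\tilde F = \sum_n \left(\frac{n}{\ell}\right)a(n)q^n$ and $\Theta^{\ell-1}\tilde F = \sum_{\ell\nmid n} a(n)q^n$ are nonzero mod-$\ell$ modular forms that are again supported on square classes; on a single prime-to-$\ell$ square class, $\Theta^{(\ell-1)/2}$ acts as multiplication by the constant $\left(\frac{n_i}{\ell}\right)=\pm1$, so $\tilde F$ behaves like an eigenform of $\Theta^{(\ell-1)/2}$ up to its $\ell$-divisible part. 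I would then run the theta cycle on $\tilde F$: the filtration rises by $\ell+1$ at a generic step and can only drop when the current filtration is $\equiv 0 \pmod\ell$, while after $\frac{\ell-1}{2}$ steps the eigenform property forces the net change in filtration to be prescribed. Counting the forced drops, each a multiple of $\ell-1$, against the generic rise $\frac{\ell-1}{2}(\ell+1)$ pins $w$ from below and yields $r \ge \frac{\bar\lambda-1}{2}$. Finally, feeding this prime-to-$\ell$ bound for $\tilde F|U_\ell$ back through the relation $w = \ell\cdot\omega_\ell(\tilde F|U_\ell)$ of the previous step converts $2\iota_\lambda+1$ into the sharper constants $\iota_\lambda - \frac{\ell+1}{2}$ and $\iota_\lambda + \frac{\ell-1}{2}$ of (2) and (3).

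I expect the main obstacle to be making the theta-cycle lower bound precise. The delicate points are: verifying that multiplication by $\theta$ preserves both the square-class support and the filtration relations so that the integral-weight $\Theta$-theory genuinely applies; tracking exactly which steps of the cycle force a drop and by how much, since a drop by more than $\ell-1$ would weaken the bound; and checking that the arithmetic converting the cycle bookkeeping into inequalities on $\bar\lambda$ versus $\iota_\lambda$ respects the two sub-cases $\bar\lambda \lessgtr \frac{\ell-1}{2}$ that separate (2) from (3).
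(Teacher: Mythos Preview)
The paper does not prove Theorem~\ref{ABC}: it is quoted from the references \cite{AB, C, CK}, accompanied only by the remark that \cite{AB} established these bounds via the Shimura lift and Galois representations, while \cite{C} reproved them using only the mod-$\ell$ theory of modular forms and \cite{CK} sharpened them. There is thus no in-paper proof to compare against.

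Your outline is essentially the strategy of \cite{C, CK}: reduce (2) and (3) to (1) via the Frobenius identity $\tilde F \equiv (\tilde F|U_\ell)^\ell$ together with filtration multiplicativity (Proposition~\ref{filtration}(2)), and handle (1) by a theta-cycle filtration count after passing to integral weight by multiplying with a power of $\theta$ (Proposition~\ref{filtration}(1)). The plan is sound and your list of delicate points is accurate. Two cautions for a full write-up. First, the ``eigenform of $\Theta^{(\ell-1)/2}$'' heuristic is not literally how the argument runs, since the $\ell$-divisible part of each square class obstructs it; in practice one compares $\omega_\ell(\Theta^{(\ell-1)/2}G)$ with $\omega_\ell(G)$ directly by counting how many of the intermediate filtrations $\omega_\ell(\Theta^j G)$ are divisible by $\ell$ (each such ``low point'' forcing a drop of at least $\ell-1$), and the square-class support enters by guaranteeing that the cycle effectively closes after $(\ell-1)/2$ steps rather than $\ell-1$. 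Second, when you reduce (2)/(3) to (1) and apply the bound to $\tilde F|U_\ell$, the hypothesis $\lambda' \ge 2$ may fail if $w/\ell$ is small, and the case where $\tilde F$ is congruent to a constant also needs to be handled separately; these base cases are easy but must be checked to get the exact constants in (2) and (3).
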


Furthermore, the following lemma was proved in the proof of Proposition 5.1 in \cite{ACR}.

\begin{lem}\cite[Proposition 5.1]{ACR} \label{ACRlemma}
Let $\ell\geq5$ be a prime and $\lambda$ be a non-negative integer.
Let $K$ be a number field, and $\nu$ be a prime ideal of $K$ above $\ell$.
Let $f\in \mathcal{O}_{\nu}[[q]]$ be a cusp form of weight $\lambda+\frac12$ on $\Gamma_0(4)$ with  trivial character.
Suppose that $f$ has  Fourier expansion of the form  (\ref{thetaform}) and $f \not\equiv 0 \pmod{\nu}$.
Then, we have
\[
d_i = 1, \ell, 2,\ \text{or}\ 2\ell.
\]
\end{lem}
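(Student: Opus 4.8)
The plan is to show that no odd prime $p\neq\ell$ can divide any of the square-free integers $d_i$ occurring with a coefficient nonzero modulo $\nu$; since each $d_i$ is square-free, this forces every occurring $d_i$ to be a product of a subset of $\{2,\ell\}$, i.e. $d_i\in\{1,2,\ell,2\ell\}$. Throughout I would reduce modulo $\nu$ and work with $\tilde f=\sum_n a(n)q^n\in\mathbb{F}[[q]]$, where $\mathbb{F}=\mathcal O_\nu/\nu$. Since $f$ is cuspidal the constant term vanishes, so $\tilde f\equiv\sum_{i}\sum_{n\geq1}a(d_in^2)q^{d_in^2}$, and the whole problem is about which square classes can support a nonzero coefficient. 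Suppose for contradiction that $p$ is an odd prime with $p\neq\ell$ dividing some occurring class $d_{i_0}$.

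The first task is to isolate a single square class. I would observe that the $\mathbb{F}$-span of mod-$\nu$ reductions of weight $\lambda+\tfrac12$ forms on $\Gamma_0(4)$ supported on finitely many square classes is stable under the half-integral Hecke operators $T_{p^2}$ for $p\nmid 4\ell$, because $T_{p^2}$ sends $a(n)$ to a combination of $a(p^2n)$, $a(n)$ and $a(n/p^2)$, each having the same square-free part as $n$. Diagonalising (after a finite extension of $\mathbb{F}$), together with quadratic twisting by $\chi_p=\left(\frac{\cdot}{p}\right)$ to project onto a fixed value of $\chi_p(d_i)$, I may pass to a nonzero cuspidal form $\tilde F$, congruent modulo $\nu$ to a form of weight $\lambda+\tfrac12$ on $\Gamma_0(4)$, supported on the single square class $t=d_{i_0}$ with $p\mid t$.

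The heart of the argument is then the Shimura correspondence. For the square-free $t$, the $t$-th Shimura lift $\mathrm{Sh}_t(\tilde F)$ is a nonzero modular form of integral weight $2\lambda$ on $\Gamma_0(2)$ whose nebentypus is the quadratic character $\chi'_t=\left(\frac{(-1)^\lambda t}{\cdot}\right)$; the coefficient recursions defining $\mathrm{Sh}_t$ identify its $p$-th Hecke eigenvalue with the $T_{p^2}$-eigenvalue of $\tilde F$ and show $\mathrm{Sh}_t(\tilde F)\not\equiv0\pmod\nu$. The odd part of the conductor of $\chi'_t$ equals the odd part of $t$, hence is divisible by $p$. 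The plan is to conclude by a conductor–level compatibility: a nonzero mod-$\ell$ modular form of level prime to $p$ cannot carry a nebentypus that is ramified at the odd prime $p\neq\ell$. Ramification at $2$ is permitted because $2$ divides the level $4$, and ramification at $\ell$ is permitted because modulo $\ell$ the character $\left(\frac{\ell}{\cdot}\right)$ can be absorbed through the Frobenius relation $\theta^\ell\equiv\theta\mid V_\ell\pmod\ell$ (which is precisely the mechanism producing the class $\ell$). Thus the only admissible odd prime factor of $t$ is $\ell$, and together with the factor $2$ this yields $t\in\{1,2,\ell,2\ell\}$, contradicting the assumption $p\mid t$.

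The main obstacle will be making this conductor–level compatibility precise modulo $\nu$. One must show rigorously that a quadratic character ramified at an odd prime $p\neq\ell$ cannot be the nebentypus of a nonzero reduction mod $\nu$ of a form of level prime to $p$ — a statement best phrased through the associated mod-$\ell$ Galois representation and its ramification at $p$ — while simultaneously verifying that the $\ell$-ramified and $2$-ramified cases genuinely do occur, so that all four classes $1,2,\ell,2\ell$ survive. The eigenform reduction, the explicit Shimura-lift recursions, and the weight bookkeeping (controlling the power of $E_{\ell-1}\equiv1\pmod\ell$ used to normalise weights) are routine by comparison; the delicate point is the congruence-theoretic control of the admissible characters, i.e. the level-$4$ form of the mod-$\ell$ analogue of Vign\'eras' theorem.
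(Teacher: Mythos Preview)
Your approach diverges from the paper's and contains a genuine gap. The central claim --- that the $t$-th Shimura lift $\mathrm{Sh}_t(\tilde F)$ carries nebentypus $\chi'_t=\bigl(\frac{(-1)^\lambda t}{\cdot}\bigr)$ --- is incorrect. By Shimura's theorem, the lift of a half-integral weight form on $\Gamma_0(4N)$ with character $\chi$ has nebentypus $\chi^{2}$ on the integral-weight side; here $\chi$ is trivial, so the lift has \emph{trivial} nebentypus. The character $\chi'_t$ appears only in the Dirichlet-series recursion defining $\mathrm{Sh}_t$, not as the character of the resulting weight-$2\lambda$ form. With this, your conductor--level compatibility step has nothing to contradict: there is no character ramified at $p$ to play off against the level. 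There is a second, independent problem: your isolation step (Hecke diagonalisation together with twisting by $\chi_p$) raises the level to $\Gamma_0(4p^2)$, so even if the lift \emph{did} carry a character ramified at $p$, the level would already be divisible by $p$ and no contradiction would follow. You cannot simultaneously keep the level prime to $p$ and project onto a single square class divisible by $p$ by these operations.

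The paper sidesteps both difficulties with a more elementary device. After sieving (via \cite[Lemma~4.1]{AB}) to a form $g_i$ on $\Gamma_0(4p_1^2\cdots p_s^2)$ supported mod $\nu$ on the single class $d_i$, with the auxiliary primes $p_j$ chosen \emph{coprime to $d_i n_i\ell$}, it takes the fourth power $g_i^4$. This is an integral-weight cusp form of weight $4\lambda+2$ with \emph{trivial} character, and since sums of four squares represent every positive integer, its Fourier expansion mod $\nu$ is supported on the arithmetic progression $\{d_i n:n\ge1\}$. The conclusion $d_i\in\{1,2,\ell,2\ell\}$ then comes from the purely integral-weight result \cite[Theorem~3.1]{ACR} about such progressions, whose hypothesis is precisely that the level is coprime to $d_i\ell$ --- guaranteed here by the choice of the $p_j$. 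No Shimura lift, no nebentypus bookkeeping, and no Galois-representation input are needed.
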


For the reader's convenience, we briefly review the proof of Lemma \ref{ACRlemma}.

\begin{proof}
We may assume that  for each $1\leq i\leq t$, there is a positive integer $n_i$ such that $a(d_in_i^2) \not\equiv 0 \pmod{\nu}$.
By the argument in Lemma 4.1 of \cite{AB}, we can find odd primes $p_1, \ldots, p_s$ and a cusp form $g_i\in \mathcal{O}_{\nu}[[q]]$ of weight $\lambda+\frac12$ on $\Gamma_0(4p_1^2\cdots p_s^2)$ with trivial character
such that
\begin{enumerate}
\item $p_1, \ldots, p_s$ are relatively prime to $d_in_i\ell$,
\item $g_i(z) \equiv \sum_{(n, p_1\cdots p_s)=1} a(d_in^2)q^{d_in^2} \not\equiv 0 \pmod{\nu}$.
\end{enumerate}
Then, $g_i^4$ is a cusp form of weight $4\lambda + 2$ on $\Gamma_0(4p_1^2\cdots p_s^2)$ with trivial character and
the Fourier expansion of $g_i^4$ is of the form
\[
g_i^4(z) \equiv \sum_{n=1}^\infty b(d_i n)q^{d_i n} \pmod{\nu}.
\]
By Theorem 3.1 of \cite{ACR}, we have
\[
d_i = 1, \ell, 2,\ \text{or}\ 2\ell.
\]
\end{proof}

\subsection{Mod $\ell$ weakly holomorphic modular forms}

In this subsection, we review results, for a given weakly holomorphic modular form $f$, constructing a holomorphic modular form involving coefficients of $f$ modulo a power of a prime. They play important roles in applying congruence properties for holomorphic modular forms to weakly holomorphic modular forms.

Let $f(z)=\sum_{n\gg-\infty} a(n)q^n$ be a weakly holomorphic modular form of weight $\lambda+\frac{1}{2}$ on $\Gamma_0(N)$ with integral coefficients. Let $\ell \geq 5$ be a prime such that $\ell \nmid N$.
Treneer proved in \cite{T} that
if $m$ is sufficiently large, then for every positive integer $j$, the form $f|U_{\ell^m}$ is congruent to a holomorphic modular form $g$ of weight $\lambda+ \frac{\ell^\beta(\ell^2-1)}{2}+ \frac12$ on $\Gamma_0(N \ell^2)$ modulo $\ell^j$.
The proof of Proposition 5.1 in \cite{AB1} implies that there is a holomorphic modular form $g'$ of weight $\lambda\ell^{a}+\frac{\ell^a-1}{2}+ t(\ell-1)+\frac{1}{2}$ on $\Gamma_0(N)$ which is congruent to $g$ modulo  $\ell$, where $a$ is a positive even integer and $t$ is a positive  integer.
Let us note that the form $g'$ is obtained by considering the image of the product of an Eisenstein series and $g$ under the trace map from $\Gamma_0(N\ell)$ and $\Gamma_0(N)$. Then we have the following theorem.

\begin{thm}\cite[Theorem 3.1]{T}\cite[Proposition 5.1]{AB1}\label{weakholo}
Let $f(z)=\sum_{n\gg-\infty} a(n)q^n$ be a weakly holomorphic modular form of weight $\lambda+\frac{1}{2}$ on $\Gamma_0(N)$ with integral coefficients. Let $\ell \geq 5$ be a prime such that $\ell \nmid N$.
Then, there is a positive integer $m_0$ satisfying the following properties.
\begin{enumerate}
\item If $m\geq m_0$, then we can take a holomorphic modular form $g_m$ of weight $\lambda'+\frac12$ on $\Gamma_0(N)$ such that $\lambda' \equiv \lambda \pmod{\ell-1}$ and
\[
g_m(z) \equiv \sum_{n=0}^{\infty} a(\ell^{m} n)q^n \pmod{\ell}.
\]
\item If $m\geq m_0$, then, for every positive integer $j$, we can take an integer $\beta\geq j-1$ and a cusp form $f_{m,j}$ of weight $\lambda+ \frac{\ell^\beta(\ell^2-1)}{2}+ \frac12$ on $\Gamma_0(N \ell^2)$ with a real Dirichlet character such that
\[
f_{m,j}(z) \equiv \sum_{n=1\atop \ell\nmid n}^{\infty} a(\ell^{m} n)q^n \pmod{\ell^j}.
\]
\end{enumerate}
\end{thm}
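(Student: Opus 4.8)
The plan is to read part (2) directly off Treneer's Theorem 3.1 in \cite{T} and to deduce part (1) from the level-and-weight descent underlying Proposition 5.1 of \cite{AB1}. Both rest on the principle, going back to Serre \cite{Ser}, that iterating $U_\ell$ annihilates the principal parts of a weakly holomorphic form modulo a fixed power of $\ell$, once one first damps the poles at the auxiliary cusps by a suitable multiplier.

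For part (2), I would fix a modular form $\mathcal{E}$ of weight $\frac{\ell^2-1}{2}$ on $\Gamma_0(N\ell^2)$ with $\mathcal{E}\equiv 1\pmod{\ell}$ that vanishes at every cusp other than $\infty$, as constructed by Treneer. Raising it to the $\ell^\beta$-th power gives $\mathcal{E}^{\ell^\beta}\equiv 1\pmod{\ell^{\beta+1}}$, of weight $\frac{\ell^\beta(\ell^2-1)}{2}$, whose vanishing order at the auxiliary cusps grows without bound. For $\beta\geq j-1$ and $\beta$ large, the factor $\mathcal{E}^{\ell^\beta}$ is congruent to $1$ modulo $\ell^j$, so $f\mathcal{E}^{\ell^\beta}$ has the same $q$-expansion as $f$ modulo $\ell^j$, while its vanishing at the auxiliary cusps absorbs the bounded poles of $f$ there; the remaining pole at $\infty$ is removed by $U_{\ell^m}$ once $\ell^m$ exceeds that pole order, i.e. once $m\geq m_0$, where $m_0$ depends only on $f$ and not on $j$. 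Thus $(f\mathcal{E}^{\ell^\beta})|U_{\ell^m}$ is holomorphic and congruent to $f|U_{\ell^m}$ modulo $\ell^j$. Sieving onto the classes $\ell\nmid n$ by means of $f|U_{\ell^m}-(f|U_{\ell^{m+1}})|V_\ell$, using $U_\ell V_\ell=\mathrm{id}$, then yields a genuine cusp form $f_{m,j}$ of weight $\lambda+\frac{\ell^\beta(\ell^2-1)}{2}+\frac12$ on $\Gamma_0(N\ell^2)$ with a real character, satisfying $f_{m,j}\equiv\sum_{\ell\nmid n}a(\ell^m n)q^n\pmod{\ell^j}$.

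For part (1), I would apply the $j=1$ case of the same construction to obtain a holomorphic form $g$ on $\Gamma_0(N\ell^2)$, of weight $\lambda+\frac{\ell^\beta(\ell^2-1)}{2}+\frac12$, with $g\equiv f|U_{\ell^m}\pmod{\ell}$, and then run the descent of \cite{AB1}, which passes from $\Gamma_0(N\ell^2)$ through $\Gamma_0(N\ell)$ to $\Gamma_0(N)$ by multiplying by an Eisenstein series that is $\equiv 1\pmod{\ell}$ and applying the trace map from $\Gamma_0(N\ell)$ to $\Gamma_0(N)$. Here the Frobenius relation $g^{\ell^a}\equiv g|V_{\ell^a}\pmod{\ell}$ together with $U_\ell V_\ell=\mathrm{id}$ is used to reproduce the full expansion $\sum_n a(\ell^m n)q^n$ after the trace. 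This produces a holomorphic form $g'$ on $\Gamma_0(N)$ of weight $\lambda\ell^{a}+\frac{\ell^a-1}{2}+t(\ell-1)+\frac12$, with $a$ a positive \emph{even} integer and $t$ a positive integer, congruent to $g_m:=\sum_n a(\ell^m n)q^n$ modulo $\ell$. Writing $\lambda'=\lambda\ell^{a}+\frac{\ell^a-1}{2}+t(\ell-1)$, the congruence $\ell\equiv 1\pmod{\ell-1}$ gives $\lambda\ell^{a}\equiv\lambda$, while $\frac{\ell^a-1}{2}=\frac{\ell-1}{2}(\ell^{a-1}+\cdots+1)\equiv 0\pmod{\ell-1}$ since $a$ even forces the second factor to be even; hence $\lambda'\equiv\lambda\pmod{\ell-1}$, as required.

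The main obstacle is the principal-part vanishing at the heart of part (2): one must verify that, for $m\geq m_0$ chosen independently of $j$, the form $(f\mathcal{E}^{\ell^\beta})|U_{\ell^m}$ is holomorphic at \emph{every} cusp of $\Gamma_0(N\ell^2)$, not merely at $\infty$. The delicate point is that $U_\ell$ permutes and rescales the local expansions at the cusps above $\ell$, so one must weigh the growth of the vanishing order of $\mathcal{E}^{\ell^\beta}$ against the fixed pole orders and cusp widths while controlling the effect of $U_\ell$ there; this simultaneous bookkeeping is exactly what Treneer establishes in \cite[Theorem 3.1]{T}. Once it is in place, part (1) follows formally from the trace computation of \cite[Proposition 5.1]{AB1} together with the elementary weight congruence recorded above.
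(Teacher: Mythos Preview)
Your proposal is correct and follows essentially the same approach as the paper. The paper does not give a formal proof of this theorem; it merely explains in the preceding paragraph that part~(2) is Treneer's Theorem~3.1, and that part~(1) is obtained by taking the resulting holomorphic form on $\Gamma_0(N\ell^2)$, multiplying by an Eisenstein series congruent to $1$ modulo $\ell$, and applying the trace map from $\Gamma_0(N\ell)$ to $\Gamma_0(N)$ as in the proof of Proposition~5.1 of \cite{AB1}, yielding weight $\lambda\ell^{a}+\frac{\ell^a-1}{2}+t(\ell-1)+\frac12$ with $a$ even. Your sketch fills in the mechanics of both steps (the multiplier $\mathcal{E}^{\ell^\beta}$, the sieving $f|U_{\ell^m}-(f|U_{\ell^{m+1}})|V_\ell$, the Frobenius/trace descent) and records the elementary verification that $\lambda'\equiv\lambda\pmod{\ell-1}$, but the route is the same one the paper indicates.
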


\section{Proof of Main Theorems} \label{section3}
In this section, we prove Theorems \ref{main1} and \ref{main2}.

\subsection{Proof of Theorem \ref{main1}}
To prove this, we need the following lemma.

\begin{lem} \label{filtrationcomputation}
Let $\ell\geq5$ be a prime.
Suppose that $f$ is a modular form  with integral coefficients in $M_k$ for $k\in \ZZ$.
If $\omega_\ell(f) > \ell+1$, then
\[
\omega_\ell(f|U_\ell) < \omega_\ell(f).
\]
\end{lem}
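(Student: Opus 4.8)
The plan is to derive the result directly from the filtration bound in Theorem \ref{property_integral}(4), the fourth filtration property for modular forms of integral weight. Writing $w = \omega_\ell(f)$, that bound reads
\[
\omega_\ell(f|U_\ell) \leq \ell + \frac{w-1}{\ell}.
\]
So it suffices to show that under the hypothesis $w > \ell + 1$ the right-hand side is strictly smaller than $w$; chaining this strict inequality with the bound then gives $\omega_\ell(f|U_\ell) < w = \omega_\ell(f)$, which is the claim.

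First I would reduce the desired conclusion to an elementary inequality. Multiplying the target inequality $\ell + \frac{w-1}{\ell} < w$ through by $\ell > 0$ turns it into $\ell^2 + w - 1 < \ell w$, that is $\ell^2 - 1 < (\ell-1)w$. Factoring the left-hand side as $(\ell-1)(\ell+1)$ and dividing by $\ell - 1 > 0$, this is equivalent to $\ell + 1 < w$, which is exactly the hypothesis. Hence the chain $\omega_\ell(f|U_\ell) \leq \ell + \frac{w-1}{\ell} < w$ holds and yields the conclusion.

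The argument is essentially immediate once Theorem \ref{property_integral}(4) is invoked, so there is no substantial obstacle; the only subtlety worth flagging is that the bound in Theorem \ref{property_integral}(4) is non-strict while the conclusion is strict. This is reconciled precisely by the strictness of the hypothesis $w > \ell+1$: at the boundary value $w = \ell+1$ one computes $\ell + \frac{w-1}{\ell} = \ell + 1 = w$, so the bound alone would give only $\omega_\ell(f|U_\ell) \leq \omega_\ell(f)$, and the strict decrease can genuinely fail there. Thus $\omega_\ell(f) > \ell + 1$ is exactly the threshold that makes the passage from $\leq$ to $<$ valid, and no finer structural information about $f$ is required.
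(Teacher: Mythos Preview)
Your proof is correct and follows essentially the same approach as the paper: invoke the bound $\omega_\ell(f|U_\ell) \leq \ell + \frac{\omega_\ell(f)-1}{\ell}$ from Theorem~\ref{property_integral}(4) and verify by elementary algebra that the right-hand side is strictly less than $\omega_\ell(f)$ when $\omega_\ell(f) > \ell+1$. The paper packages the same computation as the factorization $\omega_\ell(f|U_\ell) - \omega_\ell(f) \leq \frac{1}{\ell}(1-\ell)(\omega_\ell(f)-(\ell+1)) < 0$, which is equivalent to your manipulation.
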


\begin{proof}
By Theorem \ref{property_integral} (4), we have
\[
\omega_\ell(f|U_\ell) \leq \ell + \frac{\omega_\ell(f) - 1}{\ell}.
\]
Therefore, if $\omega_\ell(f)>\ell+1$, then we see that
\[
\omega_\ell(f|U_\ell) - \omega_\ell(f) \leq \ell + \frac{\omega_\ell(f) - 1}{\ell} - \omega_\ell(f) = \frac{1}{\ell}(1-\ell)(\omega_\ell(f)-(1+\ell)) < 0.
\]
\end{proof}

Theorem \ref{weakholo} (1) implies that there exist a positive integer $m_1$ and a holomorphic modular form $g$ of weight $k+\frac12$ on $\Gamma_0(4)$ such that
\begin{equation} \label{holomorphicg}
(f|U_{\ell^m})|U_{\ell^{m_1}} \equiv g \pmod{\ell},\ \text{and}\ k \equiv \lambda \pmod{\ell-1}.
\end{equation}
Since $a(0)\not\equiv 0 \pmod{\ell}$, the modular form $g$ is not congruent to $0$ modulo $\ell$.

First, we  show that
\begin{equation} \label{claim}
\omega_\ell(g| U_{\ell^{t}}\cdot \theta^\ell) \leq \ell+1
\end{equation}
for some non-negative integer $t$.
If $\omega_\ell(g\cdot \theta^\ell)\leq  \ell+1$, then (\ref{claim}) is satisfied for $t=0$.
Assume that  $\omega_\ell(g\cdot\theta^\ell) > \ell+1$.
By Lemma \ref{filtrationcomputation}, we obtain
\begin{equation*} \label{modlcomputation1}
\omega_\ell((g\cdot\theta^\ell)|U_\ell) < \omega_\ell(g\cdot \theta^\ell).
\end{equation*}
From this and Theorem \ref{property_integral}, we see that
\[
 \omega_{\ell}((g\cdot \theta^{\ell})|U_{\ell})=\omega_{\ell}(g\cdot \theta^{\ell})  - t(\ell-1)
\]
for a positive integer $t$.
Therefore, we have
\begin{equation} \label{U_inequality}
\omega_{\ell}((g\cdot \theta^{\ell})|U_{\ell}) \leq \omega_{\ell}(g\cdot \theta^{\ell}) - (\ell-1).
\end{equation}
By the same argument in (\ref{thetacomputation2}), we see that
\begin{equation} \label{thetacomputation}
\omega_\ell((g\cdot \theta^\ell)|U_\ell) = \omega_\ell((g|U_\ell) \cdot \theta).
\end{equation}
By  Proposition \ref{filtration}(1), (\ref{thetacomputation}),  and (\ref{U_inequality}), we have
\begin{eqnarray*}
\omega_\ell((g|U_\ell) \cdot \theta^\ell) &=&
\omega_\ell(g|U_\ell \cdot  \theta) + \frac{\ell-1}{2}= \omega_{\ell}((g\cdot \theta^\ell)|U_\ell) + \frac{\ell-1}{2} < \omega_\ell(g\cdot\theta^\ell).
\end{eqnarray*}
By repeating the above process, we see that  there is a positive integer $t$ such that
\[
\omega_\ell((g| U_{\ell^{t}})\cdot \theta^\ell) \leq \ell+1.
\]

Note that (\ref{claim})  implies
\begin{equation} \label{inequality}
\omega_\ell(g|U_{\ell^t}) \leq \ell+1 - \frac{\ell}{2} = \frac{\ell}{2} + 1.
\end{equation}
Since we have
\[
g|U_{\ell^t} \equiv (f|U_{\ell^{m+m_1}})|U_{\ell^{t}} \pmod{\ell},
\]
by the hypothesis (\ref{thetaform}), we see that the Fourier expansion of $g|U_{\ell^t}$ has the form (\ref{thetaform}).
The inequality (\ref{inequality}) implies that, in the notation of Theorem \ref{ABC}, we have $\bar{\lambda} = \lambda$ and $\iota_{\lambda} = 0$.
Thus, $\lambda\leq1$ or $\lambda = \frac{\ell-1}{2}$, so
 $\omega_\ell(g|U_{\ell^t}) \in \{\frac12, \frac32, \frac{\ell}{2}\}$.

If $\omega_\ell(g|U_{\ell^t}) = \frac 32$, then
\begin{equation} \label{gtheta}
g|U_{\ell^t} \equiv c \cdot \theta^3\ (\mathrm{mod}\ \ell)
\end{equation}
for some $\ell$-integral rational constant $c \not\equiv 0 \pmod{\ell}$, since $M_{3/2}$ is a one-dimensional vector space containing $\theta^3$.
We consider the theta operator defined by
\begin{equation} \label{theta}
\Theta\left(\sum_{n=0}^\infty a(n)q^n\right) = \sum_{n=0}^\infty na(n)q^n.
\end{equation}
Let $E_{k}$ be the normalized Eisenstein series of weight $k$. Thus,
$$(\ell-1) \Theta(\theta^3) E_{\ell-1} - \frac{3}{2} \Theta( E_{\ell-1})\theta^3$$ is a cusp form of weight $\ell+1+\frac{3}{2}$ on $\Gamma_0(4)$ (see Corollary 7.2 in \cite{Co}). Let us note that, for each $\ell$,  the Fourier coefficients of $E_{\ell-1}$ are $\ell$-integral and $E_{\ell-1} \equiv 1 \pmod{\ell}$ (for example, see Theorem 7.1 in Chapter X of \cite{Lang}). Thus, there is a cusp form $h$ of weight $\ell+1+\frac{3}{2}$ on $\Gamma_0(4)$ such that
\[
\Theta(\theta^3) \equiv h \pmod{\ell}.
\]
Note that, by (\ref{gtheta}), $h$ has Fourier expansion of the form (\ref{thetaform}).
By a computation, we have
\[
h(z) \equiv \Theta(\theta^3)(z) \equiv  6q + 24q^2 + 24q^3 + \cdots \pmod{\ell}.
\]
Since $24\not\equiv 0 \pmod{\ell}$ for $\ell\geq5$, we see that one of the $d_i$ in (\ref{thetaform}) is $3$.
This is a contradiction due to Lemma \ref{ACRlemma}.
Therefore, $\omega_\ell(g|U_{\ell^t})$ is $\frac 12$ or $\frac{\ell}{2}$.
By (\ref{holomorphicg}) and  Proposition \ref{filtration} (4), we see that
\[
\lambda \equiv \omega_\ell(f)-\frac12 \equiv
\omega_{\ell}(f|U_{\ell^{m+m_1}}) -\frac12 \equiv \omega_{\ell}(g) -\frac12 \equiv \omega_{\ell}(g|U_{\ell^t}) -\frac12 \equiv 0 \pmod{\frac{\ell-1}{2}}.
\]

Now, we  show that there is a positive integer $s$ such that
\[
f|U_{\ell^s} \equiv  a(0)\theta \pmod{\ell}.
\]
If $\omega_{\ell}(g|U_{\ell^t}) = \frac12$, then
\[
f|U_{\ell^{m+m_1+t}} \equiv      g|U_{\ell^t} \equiv a(0)\theta \pmod{\ell},
\]
since $M_{\frac12}$ is a one-dimensional space spanned by $\theta$.
Suppose that $\omega_{\ell}(g|U_{\ell^t}) = \frac{\ell}{2}$.
Then, $g|U_{\ell^t}$ has  Fourier expansion of the form  (\ref{thetaform}) and $\ell | d_i$ for all $i$  by Theorem \ref{ABC}.
This implies that
\[
\{(g|U_{\ell^t})|U_{\ell}\}^\ell \equiv g|U_{\ell^t} \pmod{\ell}.
\]
By Proposition \ref{filtration} (2), we see that
\[
\frac{\ell}{2} = \omega_{\ell}(g|U_{\ell^t}) =
\omega_\ell(\{(g|U_{\ell^t})|U_{\ell}\}^\ell) = \ell \cdot \omega_{\ell}((g|U_{\ell^t})|U_{\ell}).
\]
From this, we obtain
\[
\omega_{\ell}((g|U_{\ell^t})|U_{\ell}) = \frac12,
\]
and hence, we have
\[
f|U_{\ell^{m+m_1+t+1}} \equiv    (g|U_{\ell^t})|U_{\ell} \equiv a(0)\theta \pmod{\ell}.
\]

Since we have
\[
\theta|U_\ell \equiv \theta^\ell \equiv 1 + 2\sum_{n=1}^{\infty} q^{\ell n^2} \pmod{\ell}
\]
and
\[
(\theta^\ell)|U_{\ell} \equiv \theta \pmod{\ell},
\]
we see that
\[
\biggl\{ \lim_{n\to\infty} \tilde{f}|U_{\ell^{2n}}, \lim_{n\to\infty} \tilde{f}|U_{\ell^{2n+1}} \biggr\} = \biggl\{ a(0)\tilde{\theta}, a(0)\tilde{\theta}^{\ell}\biggr\}.
\]

\subsection{Proof of Theorem \ref{main2}}
Assume 
 that $f(z) = \sum_{n\gg-\infty} a(n)q^n$ is a weakly holomorphic modular form in $M^!_{\lambda+\frac12}$ with integral coefficients
such that $\lambda\not\equiv 0 \pmod{\frac{\ell-1}{2}}$.
Let us fix a positive integer $j$.
Theorem \ref{weakholo} implies that there exists a positive integer $m_0$ such that, for $m\geq m_0$,
we can take a cusp form $f_{m,j}$ on $\Gamma_0(4\ell^2)$ satisfying
\begin{equation} \label{gmj}
f_{m,j}(z) \equiv (f|U_{\ell^m} - f|U_{\ell^{m+1}}|V_{\ell})(z) \equiv
 \sum_{n=1\atop \ell\nmid n}^\infty a(\ell^m n )q^n \pmod{\ell^j},
\end{equation}
where the operator $V_\ell$  on formal power series  is defined by
\[
\left( \sum_{n=0}^\infty a(n)q^n \right) \bigg| V_\ell = \sum_{n=0}^\infty a(n)q^{\ell n}.
\]
Furthermore, there is a holomorphic modular form $g(z) = \sum_{n=0}^\infty b(n)q^n$ on $\Gamma_0(4)$ such that
\begin{equation} \label{hmfg}
g(z) \equiv (f|U_{\ell^{m_0}})(z) \equiv \sum_{n=0}^\infty a(\ell^{m_0}n)q^n \pmod{\ell}.
\end{equation}
To prove Theorem \ref{main2}, we need the following lemma.

\begin{lem} \label{family}
With the above notation, there is a positive integer $M$ such that the Fourier expansion of $f_{M,j}$ is not of the form  (\ref{thetaform}).
\end{lem}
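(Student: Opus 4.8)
The plan is to argue by contradiction. Suppose that for every integer $M\ge m_0$ the Fourier expansion of $f_{M,j}$ is of the form (\ref{thetaform}). I will show that this forces $f|U_{\ell^{m_0}}$ to have Fourier expansion of the form (\ref{thetaform}) modulo $\ell$; since $a(0)\not\equiv 0\pmod\ell$, Theorem \ref{main1} then gives $\lambda\equiv 0\pmod{\frac{\ell-1}{2}}$, contradicting the standing hypothesis of Theorem \ref{main2}.

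First I fix $M\ge m_0$ and bound the square classes occurring in $f_{M,j}$. By Theorem \ref{weakholo}(2), $f_{M,j}$ is a cusp form on $\Gamma_0(4\ell^2)$ with a real character and $f_{M,j}\equiv\sum_{\ell\nmid n}a(\ell^M n)q^n\pmod\ell$, so it is supported on exponents prime to $\ell$. If $f_{M,j}\not\equiv 0\pmod\ell$, I run the argument of Lemma \ref{ACRlemma}: for each square-free $d_i$ occurring, the construction in Lemma 4.1 of \cite{AB} attaches a cusp form $g_i$ of the same weight as $f_{M,j}$ on $\Gamma_0(4\ell^2 p_1^2\cdots p_s^2)$, with $p_1,\dots,p_s$ odd primes coprime to $d_i\ell$, satisfying $g_i\equiv\sum_{(n,p_1\cdots p_s)=1}a(d_i n^2)q^{d_i n^2}\not\equiv 0\pmod\ell$; then $g_i^4$ is an integral-weight cusp form on $\Gamma_0(4\ell^2 p_1^2\cdots p_s^2)$ supported on the single progression $d_i n$, so Theorem 3.1 of \cite{ACR} yields $d_i\mid 2\ell$, and since $\ell\nmid n$ throughout we get $d_i\in\{1,2\}$. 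Hence, whether or not $f_{M,j}\equiv 0\pmod\ell$, the coefficient $a(\ell^M n)$ vanishes modulo $\ell$ whenever $\ell\nmid n$ and the square-free part of $n$ is neither $1$ nor $2$. (One may instead invoke the identity $f_{M,j}\equiv\Theta^{\ell-1}(g_M)\pmod\ell$, where $\Theta=q\,\frac{d}{dq}$ and $g_M$ is the holomorphic modular form on $\Gamma_0(4)$ provided by Theorem \ref{weakholo}(1) with $g_M\equiv\sum_{n\ge 0}a(\ell^M n)q^n\pmod\ell$, to realize $f_{M,j}$ modulo $\ell$ as the reduction of a form on $\Gamma_0(4)$ and apply Lemma \ref{ACRlemma} verbatim.)

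Now I assemble these facts over all $M\ge m_0$. We have $f|U_{\ell^{m_0}}\equiv\sum_{n\ge 0}a(\ell^{m_0}n)q^n\pmod\ell$ by (\ref{hmfg}). Write $n=\ell^k n'$ with $k\ge 0$ and $\ell\nmid n'$; then $a(\ell^{m_0}n)=a(\ell^{m_0+k}n')$, which by the previous paragraph (applied with $M=m_0+k$) vanishes modulo $\ell$ unless the square-free part of $n'$ belongs to $\{1,2\}$. Because the square-free part of $n=\ell^k n'$ equals that of $n'$ when $k$ is even and $\ell$ times it when $k$ is odd, it follows that, for $n\ge 1$, the coefficient $a(\ell^{m_0}n)$ can be nonzero modulo $\ell$ only if the square-free part of $n$ lies in $\{1,2,\ell,2\ell\}$. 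Together with the constant term $a(0)$, this is precisely the statement that $f|U_{\ell^{m_0}}$ has Fourier expansion of the form (\ref{thetaform}) modulo $\ell$, and the argument concludes as explained in the first paragraph.

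The part I expect to demand the most care is the boundedness of the square classes of $f_{M,j}$: Lemma \ref{ACRlemma} is stated on $\Gamma_0(4)$, whereas $f_{M,j}$ lives on $\Gamma_0(4\ell^2)$, so one must check that the passage through Lemma 4.1 of \cite{AB} and Theorem 3.1 of \cite{ACR} still goes through on this larger level and still produces $d_i\mid 2\ell$ — or, following the alternative route, that $\Theta^{\ell-1}(g_M)$ is congruent modulo $\ell$ to a genuine \emph{cusp} form on $\Gamma_0(4)$, i.e. vanishes at the cusps $0$ and $\tfrac12$ as well as at $\infty$. The remaining steps are routine bookkeeping with square classes and with the operators $U_\ell$ and $V_\ell$.
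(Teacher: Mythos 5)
Your overall strategy (assume every $f_{M,j}$ has the form (\ref{thetaform}), force the square classes to lie in $\{1,2\}$ uniformly in $M$, assemble to conclude that $f|U_{\ell^{m_0}}$ itself has the form (\ref{thetaform}), and then contradict Theorem \ref{main1} using $\lambda\not\equiv 0\pmod{\frac{\ell-1}{2}}$) is genuinely different from the paper's, and the assembly step and final contradiction are fine. But the step you yourself flag is exactly where the argument is incomplete, and as written it is a genuine gap. Lemma \ref{ACRlemma} and Theorem 3.1 of \cite{ACR} are results for forms whose level is prime to $\ell$: the proof recalled in the paper passes from $\Gamma_0(4)$ to $\Gamma_0(4p_1^2\cdots p_s^2)$ with the $p_i$ coprime to $\ell$, and the dichotomy $d_i\in\{1,2,\ell,2\ell\}$ comes from mod $\ell$ arguments that require $\ell\nmid$ level. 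The form $f_{M,j}$ lives on $\Gamma_0(4\ell^2)$ (with a real character), so you cannot ``run the argument of Lemma \ref{ACRlemma}'' on it without proving a new level-$4\ell^2$ version of Theorem 3.1 of \cite{ACR}; nothing quoted in the paper supplies this. Your fallback, $f_{M,j}\equiv\Theta^{\ell-1}(g_M)\pmod{\ell}$ with $g_M$ on $\Gamma_0(4)$ from Theorem \ref{weakholo}(1), is the right way around the level problem, but it rests on the unproven assertion that each $\Theta$-iterate is congruent mod $\ell$ to an honest holomorphic \emph{cusp} form on $\Gamma_0(4)$ with trivial character, so that Lemma \ref{ACRlemma} applies; you list this as something ``one must check'' rather than checking it. It can be done with the device the paper already uses in the proof of Theorem \ref{main1}: for a form $h$ of half-integral weight $w$ on $\Gamma_0(4)$, the combination $(\ell-1)\Theta(h)E_{\ell-1}-w\,\Theta(E_{\ell-1})h$ is a cusp form of weight $w+\ell+1$ (Corollary 7.2 of \cite{Co}) and is congruent to $-\Theta(h)$ modulo $\ell$ since $E_{\ell-1}\equiv 1$ and $\Theta(E_{\ell-1})\equiv 0\pmod{\ell}$; iterating $\ell-1$ times realizes $f_{M,j}$ modulo $\ell$ as a level-$4$ cusp form with trivial character. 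Until some such argument is included, your key claim $d_i\in\{1,2\}$ is unsupported.

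For contrast, the paper never bounds the square classes of the $f_{m,j}$: it uses Theorem \ref{main1} at the outset to see that $g$ (the level-$4$ form congruent to $f|U_{\ell^{m_0}}$) is supported modulo $\ell$ on infinitely many square classes, removes the finitely many classes $e(m,i)$ with $m<m_1$ by the twisting construction of Lemma 4.1 of \cite{AB}, and derives a contradiction from filtration growth, namely $\omega_\ell(g^2)\geq \ell^{m_1-m_0}$ for every $m_1$. So your route, once the missing cuspidality/level step is supplied, would give a genuinely different and arguably more direct proof; as it stands, it is incomplete at its central point.
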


\begin{proof}
Suppose that, for every positive integer $m\geq m_0$, the Fourier expansion of $f_{m,j}$ is of the form  (\ref{thetaform}), i.e.,
$(f_{m,j})(z) = \sum_{n=1}^\infty c_{m,j}(n)q^n$ has  Fourier expansion of the form
\[
f_{m,j}(z) \equiv \sum_{i=1}^{t(m)} \sum_{n=1}^\infty c_{m,j}(d(m,i)n^2)q^{d(m,i) n^2} \pmod{\ell},
\]
where $d(m,i)$ are square-free positive integers.

Let
\[
e(m,i)=
\begin{cases}
d(m,i) & \text{if $m-m_0$ is even},\\
d(m,i)\ell & \text{if $m-m_0$ is odd}.
\end{cases}
\]
Note that, by (\ref{gmj}) and (\ref{hmfg}), we have
\[
c_{m,j}(d(m,i)n^2) \equiv b(\ell^{m-m_0} d(m,i) n^2) \pmod{\ell}
\]
and that
$e(m,i)$ is the square-free part of $\ell^m d(m,i) n^2$ since $\ell \nmid d(m,i)$.

Since $\lambda \not\equiv 0 \pmod{\frac{\ell-1}2}$, by (\ref{hmfg}) and Theorem \ref{main1}, we see that the Fourier expansion of
$g$
is not of the form  (\ref{thetaform}).
This means that there are infinitely many square-free positive integers $d$ such that
\[
b(dn^2) \not\equiv 0 \pmod{\ell}
\]
for some $n\in\ZZ$.
Let $T$ be the set of such square-free positive integers $d$.

Let $m_1\geq m_0+1$ be a positive integer.
Let
\[
S = \{ e(m,i)\ | m < m_1\}.
\]
This is a finite set.
Then, there is a square free integer $t \in T - S$ since $T$ is an infinite set.
By the argument in Lemma 4.1 of \cite{AB}, we can construct a modular form $h(z) =\sum_{n=0}^{\infty}c(n)q^n$ of weight $k+\frac{1}{2}$ on $\Gamma_0(4N')$ from $g$ such that
\begin{enumerate}
\item $N'$ is relatively prime to $\ell$,
\item $h \not \equiv 0 \pmod{\ell}$,
\item $c(n) \equiv 0 \pmod{\ell}$ if $e(m,i)|n$ for some $e(m,i) \in S$.
\end{enumerate}
Then, the Fourier expansion of $h$ is of the form
\[
h(z) \equiv \sum_{n=0}^\infty c(\ell^{m'}n) q^{\ell^{m'}n} \pmod{\ell},
\]
where $m' = m_1-m_0$.
Therefore, we have
\[
(h|U_{\ell^{m'}})^{\ell^{m'}} \equiv h \pmod{\ell}.
\]
This means that
\[
\omega_\ell(g^2) \geq  \omega_\ell(h^2) = \omega_\ell((h|U_{\ell^{m'}})^{2\ell^{m'}}) = \ell^{m'} \cdot \omega_\ell((h|U_{\ell^{m'}})^2) \geq \ell^{m'}.
\]
This is a contradiction since $m'$ can be any positive integer with $m'\geq1$.
\end{proof}

By Lemma \ref{family}, there is a positive integer $M$ such that the Fourier expansion of $f_{M,j}$ is not of the form  (\ref{thetaform}).
Let $r$ be an integer with $r \not \equiv 0 \pmod{\ell^j}$.
By Theorem \ref{AB}, the coefficients of $f_{M,j}$ are well-distributed modulo $\ell^j$.
Therefore,   we have
\begin{equation*}
\#\{ 0 \leq n \leq X \; | \;  c_{M,j}(n)\equiv r\ (\mathrm{mod}\ \ell^j) \} \gg_{r,\ell^j}
{\sqrt{X}}/{\log X},
\end{equation*}
where $c_{M,j}(n)$ denotes the $n$th Fourier coefficient of $f_{M,j}$.
By (\ref{gmj}), we obtain
\begin{equation*}
\#\{ 0 \leq n \leq \ell^M X \; | \;  a(n)\equiv r\ (\mathrm{mod}\ \ell^j) \} \gg_{r,\ell^j}
{\sqrt{X}}/{\log X}.
\end{equation*}
Therefore, we have
\begin{equation*}
\#\{ 0 \leq n \leq X \; | \;  a(n)\equiv r\ (\mathrm{mod}\ \ell^j) \} \gg_{r,\ell^j}
{\sqrt{X/\ell^M}}/{\log (X/\ell^M)} \gg_{r,\ell^j}{\sqrt{X}}/{\log X}.
\end{equation*}
Furthermore, in \cite{T}, Treneer  proved that
\begin{equation*}
\#\{ 0 \leq n \leq X \; | \;  a(n)\equiv 0\ (\mathrm{mod}\ \ell^j) \} \gg_{r,\ell^j} X.
\end{equation*}
Therefore, the Fourier coefficients of $f$ are well-distributed modulo $\ell^j$.

\section*{Acknowledgments}
The authors are grateful to the referee for useful comments. The authors also thank Scott Ahlgren for helpful comments on the previous version of this paper.

\end{document}